\newtheorem{theorem}{Theorem}[section]
\newtheorem{lemma}[theorem]{Lemma}
\theoremstyle{definition}
\newtheorem{definition}[theorem]{Definition}
\newtheorem{cor}[theorem]{Corollary}
\theoremstyle{remark}
\newtheorem{remark}[theorem]{Remark}
\numberwithin{equation}{section}
\newcommand{\mres}{\mathbin{\vrule height 1.6ex depth 0pt width
0.13ex\vrule height 0.13ex depth 0pt width 1.3ex}}
\begin{document}

\title[Polyharmonic equations involving surface measures]{Polyharmonic equations involving \\  surface measures}

\author{Marius Müller}
\address{Albert-Ludwigs-Unversität Freiburg, Mathematisches Institut, 79104 Freiburg im Breisgau}
\email{marius.mueller@math.uni-freiburg.de}

%

\subjclass[2020]{Primary: 35R06, 35J30, 
Secondary: 49Q20, 47N50}

\date{\today}


\keywords{Polyharmonic equations; PDEs with measures; signed distance function;  Alt-Caffarelli Problem. \\
\textit{Acknowledgements.} The author would like to thank the anonymous referee for helpful suggestions.}

\begin{abstract}
This article studies (optimal) $W^{2m-1,\infty}$-regularity for the polyharmonic equation $(-\Delta)^m u = Q \; \mathcal{H}^{n-1} \mres \Gamma$, where $\Gamma$ is a (suitably regular) $(n-1)$-dimensional submanifold of $\mathbb{R}^n$, $\mathcal{H}^{n-1}$ is the Hausdorff measure, and $Q$ is some suitably regular density. 
We extend findings in \cite{PoissonMarius}, where the second-order equation $-\mathrm{div}(A(x)\nabla u) = Q \; \mathcal{H}^{n-1} \mres \Gamma$ is studied. As an application we derive (optimal) $W^{3,\infty}$-regularity for solutions of the biharmonic Alt-Caffarelli problem in two dimensions. \\
\end{abstract}
\maketitle


\section{Introduction} \label{section:intro}
Let $\Omega \subset \mathbb{R}^n$ be a $C^\infty$-smooth domain, $n \geq 2$. In this article we prove (optimal) $W^{3,\infty}$-regularity for solutions of the higher order measure-valued equation
\begin{equation}
    (-\Delta)^2 u  = Q \; \mathcal{H}^{n-1} \mres \Gamma \quad \textrm{in $\Omega$}, 
\end{equation}
where $\Gamma \subset \subset \Omega$ is some closed $C^{1,\alpha}$-submanifold and $Q \in C^{0,\alpha}(\Gamma)$ for some $\alpha > 0$. Later we will also study what happens if one replaces  $(-\Delta)^2$ by $(-\Delta)^m$ for some $m \in \mathbb{N}, m \geq 2$. 

For $(-\Delta)^2$ we impose \emph{Navier boundary conditions} with smooth boundary data $u_0 \in C^{\infty}(\overline{\Omega})$, i.e. one would classically require that  $u = u_0$ on $\partial \Omega$ and $\Delta u = 0$ on $\partial \Omega$. The weak formulation of this problem is given by the following
\begin{definition}
We say that $u \in W^{2,2}(\Omega)$ is a weak solution of 
\begin{equation}\label{eq:biharmeqmeas}
    \begin{cases}
    (-\Delta)^2 u = Q \;  \mathcal{H}^{n-1} \mres \Gamma & \textrm{in} \; \Omega \\ u = u_0 , \; \Delta u = 0 & \textrm{on} \; \partial \Omega
    \end{cases}
\end{equation}
if $u- u_0 \in W_0^{1,2}(\Omega)$ and
\begin{equation}\label{eq:Navier}
    \int_\Omega \Delta u \Delta \phi \; \mathrm{d}x = \int_{\Gamma} Q \phi \; \mathrm{d}\mathcal{H}^{n-1} \quad \forall \phi \in C^2(\overline{\Omega})\cap W_0^{1,2}(\Omega).
\end{equation}
\end{definition}
One might notice that the demanded a priori regularity is not enough to speak of classical Navier boundary conditions. The boundary values are however \emph{encoded} in the usage of the large test function space $C^2(\overline{\Omega}) \cap W_0^{1,2}(\Omega)$. Indeed,  \eqref{eq:Navier} is equivalent to $v=- \Delta u$ being a very weak solution of the \emph{measure-valued Dirichlet problem} 
\begin{equation}\label{eq:Poiprob}
    \begin{cases}
   - \Delta v = Q \;  \mathcal{H}^{n-1} \mres \Gamma & \mathrm{in } \;  \Omega \\ 
   \quad  v = 0 & \mathrm{on } \; \partial \Omega,
    \end{cases}
\end{equation}
 in the sense of \cite[Definition 3.1]{Ponce}. 
  If one considers \eqref{eq:Poiprob} for some suitably regular data $Q$, $\Gamma$, one can show that $v = \Delta u \in W^{1,\infty}(\Omega) \cap C_0(\overline{\Omega})$ and the Navier boundary conditions `$\Delta u = 0$ on $\partial \Omega$' also hold classically. We refer to Section \ref{sec:modelprob} for details.
  
  Optimal regularity of weak solutions of \eqref{eq:Poiprob} was also the main object of study in \cite{PoissonMarius}, even for more general elliptic operators than (-$\Delta$). 
  The treatment in Section \ref{sec:modelprob} only resembles the same regularity results as in \cite{PoissonMarius}, 
  but with a more elementary and self-contained argument in the given special case.

Once this regularity for $\Delta u$ is obtained, elliptic regularity implies $u \in W^{3,q}(\Omega)$ for all $q \in [1,\infty)$. 

\vspace{1em}

\textbf{Question 1.} Optimal regularity. 

\vspace{1em}

While $W^{3,q}$-regularity is now understood for each $q < \infty$ it is not clear whether $u \in W^{3,\infty}(\Omega)$. This shall be the main question of this article.

We remark that one can not expect regularity beyond $W^{3,\infty}(\Omega)$ --- e.g. we shall see in Section \ref{sec:modelprob} that $u \not \in C^3(\Omega)$ unless $Q \equiv 0$ (cf. Remark \ref{rem:nonttoC3}).  
Also $W^{4,1}$-regularity is excluded by \eqref{eq:biharmeqmeas}, since $Q \; \mathcal{H}^{n-1}\mres \Gamma \not \in L^1(\Omega)$ (and hence $\Delta^2 u \not \in L^1(\Omega)$). 

This article gives a positive answer to the $W^{3,\infty}$-regularity under some further regularity assumptions on $\Gamma$ and $Q$. 

\begin{theorem}[Optimal regularity]\label{thm:optireg}
Let $Q \in W^{2,p}(\Omega)$ $(p>n)$, $\Gamma = \partial \Omega'$ for some $C^2$-smooth domain $\Omega' \subset \subset \Omega$ 
and $u_0 \in C^\infty(\overline{\Omega})$. Then each weak solution of \eqref{eq:biharmeqmeas} lies in 
$W^{3,\infty}(\Omega)$.
\end{theorem}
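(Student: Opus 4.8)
The plan is to reduce the fourth-order problem to the second-order model problem of Section~\ref{sec:modelprob} and then to transfer regularity across the interface by an explicit singular corrector built from the signed distance function. Setting $v := -\Delta u$, the weak formulation \eqref{eq:Navier} says exactly that $v$ is the very weak solution of the measure-valued Dirichlet problem \eqref{eq:Poiprob}; by the results of Section~\ref{sec:modelprob} we therefore have $v \in W^{1,\infty}(\Omega) \cap C_0(\overline{\Omega})$, and away from $\Gamma$ the function $v$ is harmonic, hence smooth, its only loss of regularity being a jump of the normal derivative across $\Gamma = \partial\Omega'$ of size governed by $Q$. The naive hope --- to feed $\Delta u = -v \in W^{1,\infty}$ into global elliptic estimates --- fails, because the Laplacian does not control the full Hessian at the $L^\infty$ endpoint (this is precisely the borderline at which $C^3$-regularity breaks down, cf.\ Remark~\ref{rem:nonttoC3}); so the one-sided structure of $v$ must be exploited.

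First I would record the regularity that comes for free. Since $\Delta u = -v \in L^q(\Omega)$ for every $q<\infty$, elliptic regularity gives $u \in W^{3,q}(\Omega)$ for all $q<\infty$, and Morrey's embedding yields $u \in C^{2,\beta}(\overline{\Omega})$ for every $\beta<1$. In particular $u$, $\nabla u$ and $\nabla^2 u$ are continuous up to and across $\Gamma$: there is no jump in the derivatives of $u$ up to second order. It remains only to upgrade this to $\nabla^2 u \in C^{0,1}$, i.e.\ $\nabla^3 u \in L^\infty(\Omega)$. Away from $\Gamma$ this is clear, since $v$ (hence $\Delta u$) is smooth there; the question is purely local near $\Gamma$.

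The heart of the matter is the one-sided sharp regularity near $\Gamma$, and here I would introduce the signed distance $d$ to $\Gamma$, which is $C^2$ in a tubular neighborhood because $\Gamma$ is $C^2$. The model singularity of $v$ is a multiple of $Q\,|d|$: recall $|\nabla d|=1$ and, distributionally, $\Delta|d| = 2\,\mathcal{H}^{n-1}\mres\Gamma + \mathrm{sign}(d)\,\Delta d$, so such a term reproduces the surface source. Correspondingly the model singularity of $u$ is $\sim Q\,|d|^3$: using $|\nabla d|=1$ one computes
\begin{equation}
  \Delta\!\left(\tfrac16 |d|^3\right) = |d| + \tfrac12\, d\,|d|\,\Delta d,
\end{equation}
whose leading term matches $\Delta u = -v$. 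The plan is therefore to subtract an explicit corrector $w := \tfrac{1}{12}\,\eta\,\tilde Q\,|d|^3$, where $\eta$ is a cutoff supported near $\Gamma$ and $\tilde Q$ a suitable extension of $Q$ off $\Gamma$ (admissible since $Q\in W^{2,p}\subset C^{1,\alpha}$). One checks directly that $w \in W^{3,\infty}(\Omega)$ --- its third derivatives contain the bounded-but-jumping factor $\mathrm{sign}(d)$, which is exactly the obstruction to $C^3$ --- and that the normalizing constant is chosen so that the leading gradient jump of $\nabla\Delta w$ cancels that of $\nabla\Delta u = -\nabla v$. Consequently $u-w$ solves a Poisson equation whose right-hand side has improved (to leading order, $C^1$) behavior across $\Gamma$, so that the one-sided estimates apply to give $u-w \in W^{3,\infty}$, whence $u = w + (u-w) \in W^{3,\infty}(\Omega)$.

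The main obstacle is exactly this last step: obtaining sharp $W^{3,\infty}$ regularity up to the interface when $\Gamma$ is only $C^2$. This is the endpoint that neither Calderón--Zygmund theory (which stops at $q<\infty$) nor Schauder theory (which would require $\Gamma \in C^{3,\alpha}$) delivers, and the low regularity of $\Gamma$ appears concretely in the corrector computation, since $\Delta d$ is then merely continuous rather than $C^1$, so that lower-order correctors or a short bootstrap may be needed to absorb the remaining terms. Once $u \in W^{3,\infty}$ is established on each closed side of $\Gamma$, the global conclusion follows by patching: $\nabla^2 u$ is continuous on all of $\Omega$ (by the $C^{2,\beta}$ bound) and Lipschitz on each side, and since nearby points on opposite sides of the $C^2$ hypersurface $\Gamma$ can be joined by segments of comparable length, $\nabla^2 u$ is globally Lipschitz, i.e.\ $u \in C^{2,1}(\overline{\Omega}) = W^{3,\infty}(\Omega)$.
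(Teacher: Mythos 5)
Your reduction to the model problem for $v=-\Delta u$, the ``free'' regularity $u\in W^{3,q}(\Omega)$ for all $q<\infty$, and the instinct to compare with powers of the signed distance are all sound --- the paper's proof is also built on $d_\Gamma$. But the proposal does not close at its decisive step, and the gap is exactly where you flag it. First, the claim that $w=\tfrac{1}{12}\,\eta\,\tilde Q\,|d|^3\in W^{3,\infty}(\Omega)$ fails for $\Gamma\in C^2$: since $\partial^2_{ij}(|d|^3)=6|d|\,\partial_i d\,\partial_j d+3\,d|d|\,\partial^2_{ij}d$, the third derivatives of $|d|^3$ contain the term $3\,d|d|\,\partial^3_{ijk}d$, and for a $C^2$-interface the third derivatives of $d_\Gamma$ do not exist even as bounded functions (this would require $\Gamma\in C^{2,1}$). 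Second, even granting a corrector, your final step is not an estimate that exists: after subtraction one is left with
\begin{equation}
\Delta(u-w)\;=\;-h\;-\;\tfrac{Q}{4}\,d|d|\,\Delta d\;+\;\cdots,
\end{equation}
where $h:=\Delta u-\tfrac{Q}{2}|d_\Gamma|\in W^{2,p}_{loc}\subset C^{1,\alpha}$ is fine, but the remainder terms are at best Lipschitz (and only if $\Gamma\in C^{2,1}$), never $C^{1,\alpha}$. A Lipschitz right-hand side gives, via Schauder, only $u-w\in C^{2,\alpha}_{loc}$ --- one full derivative short --- and the implication ``$\Delta z\in W^{1,\infty}\Rightarrow z\in W^{3,\infty}$'' is false; it is precisely the $L^\infty$-endpoint failure you yourself invoke at the beginning. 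One-sided Schauder estimates up to $\Gamma$ would in turn require more smoothness of $\Gamma$ than $C^2$. So the ``lower-order correctors or a short bootstrap'' you defer to is not a technicality: it is the entire content of the theorem.

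The paper's proof resolves this with a device your proposal is missing: it never differentiates $d_\Gamma$ more than twice, because instead of correcting $u$ (which forces the cubic $|d|^3$), it derives a \emph{second-order measure-valued equation for each second derivative} $\partial^2_{ij}u$. Testing with $\partial^2_{ij}\phi$, writing $\Delta u=h+\tfrac{Q}{2}|d_\Gamma|$, and noting that $(-\Delta)h=\mathrm{tr}(g)\in L^p$ distributionally (Lemma \ref{lem:distprep} plus the equation), hence $h\in W^{2,p}_{loc}$ by elliptic regularity, one arrives at the auxiliary equation \eqref{eq:deltaDij}:
\begin{equation}
\Delta(\partial^2_{ij}u)\;=\;\nu_i\nu_j\,Q\;\mathcal{H}^{n-1}\mres\Gamma\;+\;\bigl(\partial^2_{ij}h+g_{ij}\bigr),
\end{equation}
with bulk term in $L^p_{loc}$. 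Thus $\partial^2_{ij}u$ itself solves the model second-order problem, and the endpoint Lipschitz regularity for that problem (Lemma \ref{lem:LocLip}, itself proved by comparison with the \emph{first} power $\tfrac{Q}{2}|d_\Gamma|$) applies directly to $\partial^2_{ij}u$, giving $\partial^2_{ij}u\in W^{1,\infty}_{loc}(B_\epsilon(\Gamma))$; smoothness away from $\Gamma$ (Lemma \ref{lem:LocBiharm}) finishes the proof. In short: the missing idea is to differentiate the solution (pass to $\partial^2_{ij}u$ and reuse the second-order Lipschitz theory) rather than to integrate the corrector (pass to $|d|^3$). That choice keeps every requirement at the level of $D^2d_\Gamma$, i.e.\ at $\Gamma\in C^2$, and sidesteps the endpoint Schauder obstruction entirely, whereas your route runs into it even if one strengthens the hypothesis on $\Gamma$.
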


From now on we will abbreviate the above condition on $\Gamma$ by `$\Gamma = \partial \Omega' \in C^2$'. We also remark that $Q$ was a priori only defined on $\Gamma$ --- so the condition `$Q \in W^{2,p}(\Omega)$' requires that $Q$ can be extended to a $W^{2,p}$-function on $\Omega$.

One might expect that similar methods as in \cite{PoissonMarius} carry over to the fourth order setting. However, the approach in \cite{PoissonMarius} relies heavily on the \emph{maximum principle}, which is not available for fourth order equations.

Instead our approach relies on a suitable auxiliary equation for $\partial^2_{ij} u$, which we will derive.


\vspace{1em}

\textbf{Question 2.} Inherited regularity for $D^4u$.

\vspace{1em}

Once $W^{3,\infty}$-regularity is shown, one can also ask the question of \emph{regularity inheritance}. This is a different question from the above, as we shall explain in the following. Instead of searching the highest Sobolev class that solutions live in, we ask whether all highest order derivatives inherit their quality from the operator. For instance, for $p \in (1,\infty)$ it is known that (-$\Delta$) inherits $L^p$-regularity to $D^2u$ in the following sense: Looking at all derivatives in the sense of distributions, one has
\begin{equation}
-\Delta u \in L^p_{loc} \quad \Rightarrow  \quad  D^2 u \in L^p_{loc}
\end{equation}
for each $u \in L^1_{loc}( \mathbb{R}^n)$. 
Let now $\mathcal{M}_{loc}(\Omega)$ denote the space of all signed Radon measures on $\Omega$. It seems interesting to study under which conditions $-\Delta u \in \mathcal{M}_{loc}(\Omega)$ implies that $D^2u \in \mathcal{M}_{loc}(\Omega)$.
\begin{definition}[Inheritance of $\mathcal{M}_{loc}$-regularity for (-$\Delta$)]\label{def:13}
Let $S \subset \mathcal{M}_{loc}(\Omega)$ be a subspace. We say that the differential inclusion `$-\Delta u \in S$' inherits $\mathcal{M}_{loc}$-regularity to $D^2u$ if for each function $u \in L^1_{loc}(\Omega)$ one has (in the sense of distributions)
\begin{equation}
-\Delta u \in S \quad \Rightarrow \quad D^2u \in  \mathcal{M}_{loc}(\Omega). \label{eq:maxmloc-reg}
\end{equation}
\end{definition}
\begin{remark}
The conclusion $D^2u \in \mathcal{M}_{loc}(\Omega)$ can be phrased equivalently via $\nabla u \in BV_{loc}(\Omega)$. This will be useful for our proofs.   
\end{remark}

\begin{remark}
Notice that we have to restrict to a subspace $S \subsetneq \mathcal{M}_{loc}(\Omega)$  in Definition \ref{def:13} since conclusion \eqref{eq:maxmloc-reg} is not true for $S= \mathcal{M}_{loc}(\Omega)$. This becomes visible if one looks at $\Omega=  \mathbb{R}^n, S = \mathcal{M}_{loc}(\mathbb{R}^n)$ and considers $\mu= \delta_0 \in S$. The fundamental solution $F= F_{\Delta} \in L^1_{loc}(\mathbb{R}^n)$ solves then (distributionally) $-\Delta F = \delta_0 \in S$ but one has $D^2 F \not \in \mathcal{M}_{loc}(\mathbb{R}^n)$. The latter can be easily seen as follows. If $D^2F \in \mathcal{M}_{loc}(\mathbb{R}^n)$ then $\nabla F \in BV_{loc}(\mathbb{R}^n)$, which would imply $\nabla F \in L^{ \frac{n}{n-1} }_{loc}( \mathbb{R}^n)$. Noticing that $\nabla F(x) = \frac{x}{|x|^n}$ a.e. for any $n \geq 2$ and one readily checks  that $\nabla F \not \in L^{ \frac{n}{n-1} }_{loc}( \mathbb{R}^n)$.
\end{remark}
 

In this article the studied subspace $S$ of Definition \ref{def:13} is 
\begin{equation}\label{eq:Subspacemax}
 S := S(\Gamma,p) := \{ Q \; \mathcal{H}^{n-1} \mres \Gamma : Q \in W^{2,p}(\Omega) \}
\end{equation}
for some $p > n$ and some $\Gamma= \partial \Omega' \in C^2$. We will see in Section \ref{sec:modelprob} that $-\Delta u \in S(\Gamma,p)$ inherits $\mathcal{M}_{loc}$-regularity. This serves as a starting point for the investigation of regularity inheritance properties for (-$\Delta$)$^2$. Since the operator is of fourth order, it is now natural to ask for regularity inheritance to $D^4u$. More precisely, we ask whether $(-\Delta)^2 u \in S(\Gamma, p)$ implies that $D^4 u \in \mathcal{M}_{loc}(\Omega)$. The statement `$D^4 u \in \mathcal{M}_{loc}(\Omega)$' can again best be phrased by $D^3 u \in BV_{loc}(\Omega)$. 
 This $BV$-regularity turns out to be true if $\Gamma$ is additionally assumed $C^{2,1}$-smooth. 

\begin{theorem}[Regularity inheritance]\label{thm:bvreg}
Let $Q \in W^{2,p}(\Omega)$ $(p>n)$, $\Gamma = \partial \Omega' \in C^{2,1}$ and $u_0 \in C^\infty(\overline{\Omega})$. Then each weak solution \eqref{eq:biharmeqmeas} satisfies $D^3 u \in BV(\Omega)$.
\end{theorem}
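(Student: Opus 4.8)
The plan is to reduce this fourth order statement to two applications of the second order $\mathcal{M}_{loc}$-inheritance result for $(-\Delta)$ and $S(\Gamma,p)$ that is established in Section \ref{sec:modelprob}. Writing $v := -\Delta u$, the biharmonic equation becomes $-\Delta v = Q\,\mathcal{H}^{n-1}\mres\Gamma \in S(\Gamma,p)$, so that inheritance result already gives $\nabla v \in BV_{loc}(\Omega)$, together with the single-layer jump relation $[\partial_\nu v] = -Q$ across $\Gamma$ and piecewise smoothness of $v$ on either side of $\Gamma$. Since tangential derivatives of $v$ are continuous across $\Gamma$, this identifies the distributional second derivatives of $v$ as
\begin{equation}
\partial^2_{ij} v = g_{ij} - Q\,\nu_i\nu_j \; \mathcal{H}^{n-1}\mres\Gamma,
\end{equation}
where $\nu$ is the outer unit normal of $\Omega'$ and $g_{ij} \in L^p(\Omega)$ collects the classical second derivatives on each side of $\Gamma$; no Cantor part appears because $v$ is harmonic, hence smooth, away from $\Gamma$.

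Next I would derive the announced auxiliary equation for $w_{ij} := \partial^2_{ij} u$. As $u \in W^{3,\infty}(\Omega)$ by Theorem \ref{thm:optireg}, we have $w_{ij} \in W^{1,\infty}(\Omega)$, and differentiating the identity $v = -\Delta u$ twice in the sense of distributions gives
\begin{equation}
-\Delta w_{ij} = \partial^2_{ij} v = g_{ij} - Q\,\nu_i \nu_j \; \mathcal{H}^{n-1}\mres\Gamma .
\end{equation}
I would then split $w_{ij} = w_{ij}^{(1)} + w_{ij}^{(2)}$ according to the two terms on the right-hand side. For the regular part $-\Delta w_{ij}^{(1)} = g_{ij} \in L^p$, Calder\'on--Zygmund theory yields $w_{ij}^{(1)} \in W^{2,p}_{loc}(\Omega)$, whence $D^2 w_{ij}^{(1)} \in L^p_{loc} \subset \mathcal{M}_{loc}(\Omega)$.

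The decisive step concerns the surface part $-\Delta w_{ij}^{(2)} = -Q\,\nu_i\nu_j\,\mathcal{H}^{n-1}\mres\Gamma$. Here the point is that, although the density has changed from $Q$ to $-Q\nu_i\nu_j$, it is still admissible for the subspace $S(\Gamma,p)$: extending $\nu = \nabla d$ via the signed distance function $d$, the hypothesis $\Gamma \in C^{2,1}$ gives $d \in C^{2,1}$ in a tubular neighborhood of $\Gamma$ and hence $\nu_i\nu_j \in W^{2,\infty}_{loc} \subset W^{2,p}$; since $W^{2,p}$ is a Banach algebra for $2p > n$, the product $Q\,\nu_i\nu_j$ lies again in $W^{2,p}(\Omega)$. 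Thus $-Q\nu_i\nu_j\,\mathcal{H}^{n-1}\mres\Gamma \in S(\Gamma,p)$, and applying the second order inheritance result of Section \ref{sec:modelprob} to $w_{ij}^{(2)}$ yields $D^2 w_{ij}^{(2)} \in \mathcal{M}_{loc}(\Omega)$. Summing the two contributions gives $D^2 w_{ij} \in \mathcal{M}_{loc}(\Omega)$ for every $i,j$, i.e. $D^4 u \in \mathcal{M}_{loc}(\Omega)$, equivalently $D^3 u \in BV_{loc}(\Omega)$. Because $\Gamma \subset\subset \Omega$, the solution is biharmonic and smooth near $\partial\Omega$, while $u \in W^{3,\infty}(\Omega)$ bounds $D^3 u$ in $L^\infty(\Omega)$; hence the local $BV$-bound upgrades to $D^3 u \in BV(\Omega)$.

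I expect the main obstacle to be the first paragraph: rigorously extracting from the model problem the precise structure of $D^2 v$ — in particular that the singular part is exactly $-Q\nu_i\nu_j\,\mathcal{H}^{n-1}\mres\Gamma$ with no Cantor component and with an a.c.\ part in $L^p$. This requires the piecewise (up-to-$\Gamma$) regularity of the single-layer-type solution $v$ and the correct jump relation, which is exactly where the signed distance function and the regularity theory of Section \ref{sec:modelprob} enter. The remaining algebraic step — verifying that $Q\nu_i\nu_j \in W^{2,p}$ — is routine once $C^{2,1}$ is assumed, but it pinpoints why the $C^2$-hypothesis of Theorem \ref{thm:optireg} must be strengthened to $C^{2,1}$ for the inherited $BV$-regularity.
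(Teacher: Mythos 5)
Your proposal is correct, and its skeleton is the same as the paper's proof: an auxiliary second-order equation $-\Delta(\partial^2_{ij}u) = -Q\nu_i\nu_j\,\mathcal{H}^{n-1}\mres\Gamma + (L^p\text{-part})$, a splitting of $\partial^2_{ij}u$ into a surface part and a regular part, the Banach-algebra property of $W^{2,p}$ to recognize $Q\nu_i\nu_j$ as an admissible density in $S(\Gamma,p)$, the second-order results of Section \ref{sec:modelprob} for the surface part, and smoothness away from $\Gamma$ for the global statement. Two steps deviate. First, you derive the auxiliary equation by identifying the measure $D^2v$ for $v=-\Delta u$ and using $-\Delta\partial^2_{ij}u=\partial^2_{ij}v$, whereas the paper re-runs the integration by parts of Theorem \ref{thm:optireg} to get \eqref{eq:deltaDij}; the ``main obstacle'' you flag (the exact structure of $D^2v$, singular part $-Q\nu_i\nu_j\,\mathcal{H}^{n-1}\mres\Gamma$, a.c.\ part in $L^p_{loc}$, no Cantor part) is exactly what the decomposition $v=-\tfrac{Q}{2}|d_\Gamma|+h$, $h\in W^{2,p}_{loc}(B_\epsilon(\Gamma))$, from Lemma \ref{lem:alles} combined with Lemma \ref{lem:distprep} delivers, so that gap closes with tools already proved (your heuristic jump-relation argument should be replaced by this decomposition). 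Second --- the genuine difference --- you extend $\nu_i\nu_j$ to a $W^{2,p}(\Omega)$-function by taking $\nu=\nabla d_\Gamma$ and using that $\Gamma\in C^{2,1}$ gives $d_\Gamma\in C^{2,1}$ near $\Gamma$, then cutting off; the paper instead proves the general extension result Lemma \ref{lem:besov} (every $f\in C^{1,1}(\Gamma)$ extends to $W^{2,\infty}(\Omega)$) via the tubular-neighborhood diffeomorphism. Your route is shorter and bypasses Lemma \ref{lem:besov} entirely, at the price of invoking the $C^{2,1}$-version of the regularity of the signed distance function, which is standard but not contained in the $C^k$-statement of \cite[Section 14.6]{GilTru}, and of a (routine) cutoff $\chi\in C_c^\infty(B_\epsilon(\Gamma))$, $\chi\equiv 1$ near $\Gamma$, since $W^{2,\infty}_{loc}$ regularity near $\Gamma$ alone is not a global extension. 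One last point of care: in the final upgrade from $BV_{loc}$ to $BV(\Omega)$, the $L^\infty$-bound on $D^3u$ by itself does not control the total variation of $D^4u$ near $\partial\Omega$; what is actually needed (and what the paper uses) is the up-to-the-boundary smoothness $u\in C^\infty(\overline{\Omega}\cap N)$ from Lemma \ref{lem:LocBiharm}, which your phrase ``biharmonic and smooth near $\partial\Omega$'' should explicitly refer to.
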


Once $BV$-regularity of $D^3u$ is shown it is natural to ask the more specific question of $SBV$-regularity of $D^3u$, which we will answer positively under the same conditions. In doing so, we also have to refine our study for the model equation \eqref{eq:Poiprob}.


\vspace{1em}

\textbf{Application.} The biharmonic Alt-Caffarelli problem.

\vspace{1em}

The results can be applied to prove optimal regularity for minimizers of the \emph{biharmonic Alt-Caffarelli Problem} (in $2$d), which has recently raised a lot of interest, cf. \cite{Serena1}, \cite{Serena2}, \cite{AltCaffarelliMarius},  \cite[Section 5]{PoissonMarius}. Subject of this is the minimization of the functional
\begin{equation}
    \mathcal{E}(u) := \int_\Omega (\Delta u)^2  \; \mathrm{d}x + | \{ u > 0 \} | 
\end{equation}
among all $u \in W^{2,2}(\Omega)$ such that $u- u_0 \in W_0^{1,2}(\Omega)$ for some $u_0 \in C^\infty(\overline{\Omega}),u_0> 0$.  
We will discuss the details in Section \ref{sec:AltCaffarelli}.


\section{The model equation $-\Delta u = Q  \; \mathcal{H}^{n-1} \mres \Gamma$}
In what follows we always assume (unless stated otherwise) that $\Omega \subset \mathbb{R}^n$ is a $C^\infty$-smooth domain and  $\Gamma = \partial \Omega'\in C^2$. We call $\Omega'' := \Omega \setminus \overline{\Omega'}$ throughout this section. Moreover, if not stated otherwise $Q \in C^{0,\alpha}(\Gamma)$ is a Hölder continuous function on $\Gamma$ for some $\alpha > 0$. 

\subsection{Preliminaries}\label{sec:prelom}
Before we can study the fourth order problem we need to refine some results for the second order problem \eqref{eq:Poiprob}.  
\begin{definition}
We say that $v \in L^2(\Omega)$ is a (very) weak solution of \eqref{eq:Poiprob} if 
\begin{equation}\label{eq:weaksecsol}
-\int_\Omega v \Delta \phi \; \mathrm{d}x = \int_\Gamma Q \phi \; \mathrm{d}\mathcal{H}^{n-1} \quad \forall \phi \in C^2(\overline{\Omega}) \cap W_0^{1,2}(\Omega). 
\end{equation}
\end{definition}

\begin{remark}\label{rem:LaxMilgram}
Observe that the map $T : W_0^{1,2}(\Omega) \rightarrow \mathbb{R}$ given by $T(\phi) := \int_\Gamma Q \phi \; \mathrm{d} \mathcal{H}^{n-1}$ defines an element of the dual space $W_0^{1,2}(\Omega)^*$ (due to the continuity of the Sobolev trace operator). Hence, existence of solutions of \eqref{eq:weaksecsol} is readily checked with the Lax-Milgram theorem. Solutions constructed this way actually lie in $W_0^{1,2}(\Omega)$. Even more regularity can be obtained with this method: Properties of the Sobolev trace yield also that $T \in W_0^{1,s}(\Omega)^*$ for all $s \in (1,\infty)$. As $\Delta : W_0^{1,q}(\Omega) \rightarrow W_0^{1,\frac{q}{q-1}}(\Omega)^*$ is an isomorphism for all $q \in (1,\infty)$ we obtain that solutions lie in $ W_0^{1,q}(\Omega)$ for all $q< \infty$. 
\end{remark}

Most of the results in this section can also be obtained from \cite{PoissonMarius}, where the equation $-\mathrm{div}(A(x) \nabla u) = Q \; \mathcal{H}^{n-1} \mres \Gamma$ for an elliptic operator $A \in W^{1,q}(\Omega)$ and a density $Q \in C^{0,\alpha}(\Gamma)$ on a surface $\Gamma= \partial\Omega' \in C^{1,\alpha}$ was studied.
Our approach is self-contained in the sense that results from \cite{PoissonMarius} are not used but rather reproved in a more elementary way for the special case of $A = I_n, Q \in W^{2,p}(\Omega), (p>n)$ and $\Gamma= \partial \Omega' \in C^2$.
The key ingredient is a comparison with the \emph{signed distance function}
\begin{equation}
    d_\Gamma : B_\epsilon(\Gamma) \rightarrow \mathbb{R} \quad d_\Gamma(x) := \begin{cases}
    -\mathrm{dist}(x,\Gamma) & x \in \Omega' \\ 0 & x \in \Gamma \\ \mathrm{dist}(x,\Gamma) & x \in \Omega'', 
    \end{cases}
\end{equation}
where $B_\epsilon(\Gamma) := \{ x \in \mathbb{R}^n : \mathrm{dist}(x,\Gamma) < \epsilon \}$ for some  $\epsilon > 0$, which we will always fix and choose small enough as in \cite[Appendix 14.6]{GilTru} (and in such a way that $B_\epsilon(\Gamma) \subset \subset \Omega$). We briefly recall some properties of $d_\Gamma$ on $B_\epsilon(\Gamma)$ from \cite[Appendix 14.6]{GilTru}. First note that $B_\epsilon(\Gamma)$ is a $C^2$-domain and $d_\Gamma \in C^2(\overline{B_\epsilon(\Gamma)})$. Further, one has $\nabla d_\Gamma = \nu^{\Omega'} \circ \pi_\Gamma$ where $\pi_\Gamma : B_\epsilon(\Gamma) \rightarrow \Gamma$ is the ($C^1$-regular) \emph{nearest point projection} on $\Gamma$. 
We will always denote $\nu := \nu^{\Omega'}$ from now on.

\begin{lemma}\label{lem:distprep}
Let $Q \in W^{2,p}(\Omega)$, $p>n$ and let $\Gamma  = \partial \Omega' \in C^2$ with outer unit normal $\nu$. Further let $d_\Gamma : B_\epsilon(\Gamma) \rightarrow \mathbb{R}$ be the signed distance function of $\Gamma$. Then (distributionally in $C_0^\infty(B_\epsilon(\Gamma))'$) there holds 
\begin{equation}
    \partial^2_{ij}\left( \frac{Q}{2}|d_\Gamma| \right)  = Q \nu_i \nu_j \mathcal{H}^{n-1}\mres \Gamma + g_{ij}
\end{equation}
for some $g = (g_{ij}) \in L^p(B_\epsilon(\Gamma); \mathbb{R}^{n\times n})$. 
In particular,
\begin{equation}\label{eq:Laplace}
   - \Delta \left( \frac{Q}{2}|d_\Gamma| \right) = - Q \; \mathcal{H}^{n-1} \mres \Gamma - \mathrm{tr}(g).
\end{equation}
\end{lemma}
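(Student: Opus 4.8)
The plan is to compute the distributional second derivatives of $h := \frac{Q}{2}|d_\Gamma|$ directly from the definition, testing against $\phi \in C_0^\infty(B_\epsilon(\Gamma))$ and integrating by parts on the two regions $\Omega' \cap B_\epsilon(\Gamma)$ and $\Omega'' \cap B_\epsilon(\Gamma)$ separately. The point is that on each region $|d_\Gamma|$ coincides with a genuinely $W^{2,p}$ function, namely $h^- := -\frac{Q}{2}d_\Gamma$ on $\Omega'$ and $h^+ := \frac{Q}{2}d_\Gamma$ on $\Omega''$ (using $d_\Gamma \in C^2(\overline{B_\epsilon(\Gamma)})$ and $Q \in W^{2,p}$), so that all the singular behaviour of $h$ is concentrated on the interface $\Gamma$ and will surface as a boundary contribution in the integration by parts.

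Concretely, one splits $\int_{B_\epsilon(\Gamma)} h\, \partial^2_{ij}\phi$ along $\Gamma$ and integrates by parts once in each region; the boundary terms on $\Gamma$ carry the factor $h^\pm|_\Gamma$, which vanishes because $d_\Gamma = 0$ on $\Gamma$, so only the volume terms $-\int \partial_i h^\pm\, \partial_j\phi$ survive (this is the continuity of $|d_\Gamma|$ across $\Gamma$). A second integration by parts then produces, besides the volume terms $\int \partial^2_{ij}h^\pm\, \phi$, boundary terms on $\Gamma$ with factor $\partial_i h^\pm|_\Gamma$; since the outer normals of $\Omega'$ and $\Omega''$ along $\Gamma$ are $\nu$ and $-\nu$, these combine into the jump $\int_\Gamma (\partial_i h^+ - \partial_i h^-)\, \nu_j\, \phi\, \mathrm{d}\mathcal{H}^{n-1}$. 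Using $\nabla d_\Gamma = \nu \circ \pi_\Gamma$, so that $\partial_i d_\Gamma|_\Gamma = \nu_i$, together with $d_\Gamma|_\Gamma = 0$, one computes $\partial_i h^\pm|_\Gamma = \pm \tfrac{Q}{2}\nu_i$, whence the jump equals $Q\nu_i$. This yields exactly $\partial^2_{ij} h = Q\nu_i\nu_j\, \mathcal{H}^{n-1}\mres\Gamma + g_{ij}$, where $g_{ij}$ is the function equal to $\partial^2_{ij}h^-$ on $\Omega'$ and to $\partial^2_{ij}h^+$ on $\Omega''$.

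It remains to verify the two regularity inputs. Since $p > n$, the embedding $W^{2,p} \hookrightarrow C^1$ guarantees that $Q$, and hence $h^\pm$, have continuous first derivatives up to $\Gamma$, so the traces used above are classical and both integrations by parts are legitimate on the $C^2$-domains $\Omega'\cap B_\epsilon(\Gamma)$ and $\Omega''\cap B_\epsilon(\Gamma)$. The same embedding, together with $d_\Gamma \in C^2$, shows that the product $Q d_\Gamma$ lies in $W^{2,p}$ via the product rule, so that $g = (g_{ij}) \in L^p(B_\epsilon(\Gamma);\mathbb{R}^{n\times n})$; this is precisely where $p>n$, rather than merely $p \geq 1$, is used. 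Finally, \eqref{eq:Laplace} follows by taking $i=j$ and summing, since $\sum_i \nu_i^2 = |\nu|^2 = 1$ produces the coefficient $Q$ of the surface measure while $\sum_i g_{ii} = \mathrm{tr}(g)$. The only genuinely delicate points are the correct bookkeeping of the interface orientation and confirming that the regular part is $L^p$ and not merely $L^1$; the vanishing of the first-order boundary terms and the emergence of the measure term through the jump of $\nabla|d_\Gamma|$ by $2\nu$ across $\Gamma$ are the structural heart of the computation.
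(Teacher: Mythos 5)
Your proof is correct and follows essentially the same route as the paper's: split the integral across $\Gamma$ into $\Omega'$ and $\Omega''$, integrate by parts twice, use $d_\Gamma\vert_\Gamma=0$ to kill the first boundary terms, let the jump of $\nabla(\tfrac{Q}{2}d_\Gamma)$ (equal to $Q\nu$ on $\Gamma$) produce the surface measure, and take $g_{ij}$ to be the piecewise second derivatives. One minor attribution quibble: $g\in L^p$ follows from the product rule and $d_\Gamma\in C^2(\overline{B_\epsilon(\Gamma)})$ alone (all factors multiplying derivatives of $Q$ are bounded, so this works for any $p\geq 1$); what genuinely requires $p>n$ is the embedding $W^{2,p}\hookrightarrow C^1$ that makes the traces and the pointwise identity $\partial_i(Qd_\Gamma)\vert_\Gamma=Q\nu_i$ classical, which you also invoke.
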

\begin{proof}
Let 
$\phi \in C_0^\infty(B_\epsilon(\Gamma))$ be arbitrary but fixed. Denote by $\nu = \nu^{\Omega'}$ and $\nu^{\Omega''}$ the outer unit normals of $\Omega', \Omega''$ (respectively).
Then using integration by parts and $d_\Gamma \vert_\Gamma = 0$ we find
\begin{align}
    \int \frac{Q}{2}|d_\Gamma| \partial^2_{ij} \phi \; \mathrm{d}x & = -\int_{\Omega'} \frac{Q}{2} d_\Gamma \partial^2_{ij} \phi \; \mathrm{d}x + \int_{\Omega''} \frac{Q}{2} d_\Gamma \partial^2_{ij} \phi \; \mathrm{d}x
    \\ & = - \int_{\partial \Omega'} \frac{Q}{2}d_\Gamma \nu_i^{\Omega'} \partial_j \phi \; \mathrm{d}\mathcal{H}^{n-1} + \int_{\partial \Omega''} \frac{Q}{2}d_\Gamma \nu_i^{\Omega''} \partial_j \phi  \; \mathrm{d}\mathcal{H}^{n-1}\\ 
    & \quad + \int_{\Omega'} \partial_i \left( \frac{Q}{2} d_\Gamma \right) \partial_j \phi \; \mathrm{d}x -  \int_{\Omega''} \partial_i \left( \frac{Q}{2} d_\Gamma \right) \partial_j \phi \; \mathrm{d}x
    \\ & = \int_{\Omega'} \partial_i \left( \frac{Q}{2} d_\Gamma \right) \partial_j \phi \; \mathrm{d}x -  \int_{\Omega''} \partial_i \left( \frac{Q}{2} d_\Gamma \right) \partial_j \phi \; \mathrm{d}x
    \\ & = \int_{\partial \Omega'} \partial_i \left(\frac{Q}{2}d_\Gamma\right) \nu^{\Omega'}_j \phi \; \mathrm{d}\mathcal{H}^{n-1} - \int_{\partial \Omega''} \partial_i \left(\frac{Q}{2}d_\Gamma\right) \nu^{\Omega''}_j \phi \; \mathrm{d}\mathcal{H}^{n-1}\\
    & \quad - \int_{\Omega'} \partial^2_{ij} \left( \frac{Q}{2} d_\Gamma \right)  \phi \; \mathrm{d}x + \int_{\Omega''} \partial^2_{ij} \left( \frac{Q}{2} d_\Gamma \right)  \phi \; \mathrm{d}x.
\end{align}
Noticing that $\partial \Omega' \cap \mathrm{supp}(\phi) = \partial \Omega'' \cap \mathrm{supp}(\phi) = \Gamma$ and $\nu_j^{\Omega''} = - \nu_j^{\Omega'}$ on $\Gamma$ we infer 
\begin{equation}
     \int \frac{Q}{2}|d_\Gamma| \partial^2_{ij} \phi \; \mathrm{d}x  = \int_\Gamma \partial_i (Q d_\Gamma) \nu_j \phi \; \mathrm{d} \mathcal{H}^{n-1} + \int g_{ij} \phi \; \mathrm{d}x,
\end{equation}
where $g_{ij} := \partial^2_{ij}( \frac{Q}{2} d_\Gamma) ( \chi_{\Omega''} - \chi_{\Omega'}) \in L^p(B_\epsilon(\Gamma))$. Now notice that on $\Gamma$ one has 
\begin{equation}
    \partial_i (Q d_\Gamma) = (\partial_i Q) d_\Gamma + Q (\partial_i d_\Gamma) = 0 + Q \nu_i = Q \nu_i.
\end{equation}
Thus we infer 
\begin{equation}
     \int \frac{Q}{2}|d_\Gamma| \partial^2_{ij} \phi \; \mathrm{d}x = \int_\Gamma Q \nu_i \nu_j  \phi \; \mathrm{d}\mathcal{H}^{n-1} + \int g_{ij} \phi \; \mathrm{d}x \qquad \forall \phi \in C_0^\infty(B_\epsilon(\Gamma)),
\end{equation}
which was asserted. Formula \eqref{eq:Laplace} follows immediately from $- \Delta f  = - \sum_{i = 1}^n \partial^2_{ii} f$ and 
and using $\sum_i \nu_i^2 = 1$. 
\end{proof}

\begin{cor}\label{cor:siigndist}
Let $\Gamma = \partial \Omega' \in C^2$ and let $Q \in W^{2,p}(\Omega)$ $(p> n)$, be such that $Q\vert_\Gamma \not \equiv 0$. Then 
\begin{equation}
\textrm{(a)} \; \;  \tfrac{Q}{2}|d_\Gamma| \in W^{1,\infty}(B_\epsilon(\Gamma)), \quad \textrm{(b)} \; \; \nabla( \tfrac{Q}{2}|d_\Gamma|)  \in BV(B_\epsilon(\Gamma)), \quad \textrm{(c)} \; \;  \tfrac{Q}{2}|d_\Gamma| \not \in C^1(B_\epsilon(\Gamma)).
\end{equation}
\end{cor}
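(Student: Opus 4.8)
The plan is to read off all three assertions from Lemma~\ref{lem:distprep}, supplemented by the Sobolev embedding $W^{2,p}(\Omega)\hookrightarrow C^{1,\alpha}(\overline\Omega)$ (valid for $p>n$), which guarantees that $Q$ and $\nabla Q$ are bounded and continuous on $\overline\Omega$.

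For (a) I would compute the gradient of $f:=\tfrac{Q}{2}|d_\Gamma|$ away from $\Gamma$, where both factors are smooth. Using $\nabla|d_\Gamma|=\mathrm{sgn}(d_\Gamma)\nabla d_\Gamma=\mathrm{sgn}(d_\Gamma)(\nu\circ\pi_\Gamma)$, the product rule yields
\begin{equation}
\nabla f=\tfrac{1}{2}(\nabla Q)\,|d_\Gamma|+\tfrac{Q}{2}\,\mathrm{sgn}(d_\Gamma)\,(\nu\circ\pi_\Gamma).
\end{equation}
Both summands are bounded: the first since $\nabla Q\in L^\infty$ and $|d_\Gamma|\le\epsilon$, the second since $Q\in L^\infty$ and $|\nu\circ\pi_\Gamma|=1$. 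As $f$ is continuous with essentially bounded distributional gradient (no surface contribution appears because $f$ itself is continuous across $\Gamma$), we conclude $f\in W^{1,\infty}(B_\epsilon(\Gamma))$.

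For (b) I would invoke Lemma~\ref{lem:distprep} directly. Each second distributional derivative splits as $\partial^2_{ij}f=Q\nu_i\nu_j\,\mathcal H^{n-1}\mres\Gamma+g_{ij}$; the first term is a finite signed Radon measure because $\Gamma$ is compact and $Q\nu_i\nu_j$ is bounded, and the second lies in $L^p(B_\epsilon(\Gamma))\subset L^1(B_\epsilon(\Gamma))$, hence also defines a finite measure. Therefore $D^2f\in\mathcal M(B_\epsilon(\Gamma))$, which is equivalent to $\nabla f\in BV(B_\epsilon(\Gamma))$.

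The decisive point is (c), which I would establish by exhibiting an explicit discontinuity of $\nabla f$. From the gradient formula in (a), approaching a point of $\Gamma$ from inside $\Omega'$ (where $\mathrm{sgn}\,d_\Gamma=-1$ and $|d_\Gamma|\to0$) gives $\nabla f\to-\tfrac{Q}{2}\nu$, while approaching from $\Omega''$ (where $\mathrm{sgn}\,d_\Gamma=+1$) gives $\nabla f\to+\tfrac{Q}{2}\nu$. The two one-sided limits differ by $Q\nu$, which is nonzero at any point where $Q\ne0$; since $Q\vert_\Gamma\not\equiv0$ such a point exists, so $\nabla f$ fails to be continuous there and $f\notin C^1(B_\epsilon(\Gamma))$. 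Conceptually, the nonzero singular part $Q\nu_i\nu_j\,\mathcal H^{n-1}\mres\Gamma$ of the Hessian in Lemma~\ref{lem:distprep} encodes precisely the jump of $\nabla f$ across $\Gamma$, so a $C^1$ function would force this jump---and hence $Q\vert_\Gamma$---to vanish. The only technical care required lies in justifying the one-sided limits, which is immediate here because $Q\in C^{0,\alpha}$, $d_\Gamma\in C^2(\overline{B_\epsilon(\Gamma)})$, and $\nabla d_\Gamma=\nu\circ\pi_\Gamma$ extends continuously up to $\Gamma$ from either side; this is the main (though mild) obstacle of the argument.
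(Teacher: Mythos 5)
Your proposal is correct and follows essentially the same route as the paper: (a) via boundedness of the product gradient, (b) by reading off from Lemma~\ref{lem:distprep} that the distributional Hessian of $\tfrac{Q}{2}|d_\Gamma|$ is a finite measure (the paper phrases this as an explicit total-variation bound against vector fields $\psi$ with $|\psi|\leq 1$, which is the same thing), and (c) via the jump of $\nabla(\tfrac{Q}{2}|d_\Gamma|)$ across $\Gamma$. If anything, your treatment of (c) is slightly more complete than the paper's one-line argument, since you identify the jump as $Q\nu$ and note explicitly that it is nonzero only where $Q\neq 0$, which is exactly where the hypothesis $Q\vert_\Gamma\not\equiv 0$ enters.
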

\begin{proof}
Claim (a) is immediate by Lipschitz continuity of $|d_\Gamma|$ and the fact that  $W^{2,p} \hookrightarrow C^1$ for $p>n$. Claim (c) follows from the fact that $\nabla |d_\Gamma| = \mathrm{sgn}(d_\Gamma) \nabla d_\Gamma = \mathrm{sgn}(d_\Gamma) \nu \circ \pi_\Gamma$ has a jump at each point on $\Gamma$. 
 To show claim (b) we compute for $i = 1,...,n$  and $\psi \in C_0^\infty(B_\epsilon(\Gamma); \mathbb{R}^n)$ with $|\psi| \leq 1$ using Lemma \ref{lem:distprep}
\begin{align}
& \int \partial_i (\tfrac{Q}{2}d_\Gamma) \mathrm{div}(\psi) \; \mathrm{d}x   = \sum_{j = 1}^n \int \partial_i( \tfrac{Q}{2}d_\Gamma) \partial_j \psi_j \; \mathrm{d}x 
= - \sum_{j = 1}^n \int \tfrac{Q}{2}d_\Gamma \partial^2_{ij} \psi_j \; \mathrm{d}x 
\\ & = - \sum_{j = 1}^n  \left( \int_\Gamma Q \nu_i \nu_j \psi_j \; \mathrm{d}\mathcal{H}^{n-1} -  \int g_{ij} \psi_j \; \mathrm{d}x \right) \leq \sum_{j=1}^n (||Q||_{L^1(\Gamma)} + ||g_{ij}||_{L^1(B_\epsilon(\Gamma))}).
\end{align} 
This implies $\partial_i (\tfrac{Q}{2}d_\Gamma) \in BV(B_\epsilon(\Gamma))$ and thus (b) follows.  
\end{proof}

\subsection{Regularity results in the spirit of \cite{PoissonMarius}}\label{sec:modelprob}

We can now retrieve the results of \cite[Theorem 1.2]{PoissonMarius} as well as \cite[Corollary 3.3 and Remark 3.4]{PoissonMarius} and \cite[Lemma 2.6]{PoissonMarius} (in our special case) in an easier fashion than in \cite{PoissonMarius}. As a byproduct we will obtain a maximal $\mathcal{M}_{loc}$-regularity statement for (-$\Delta$) with $S$ as in \eqref{eq:Subspacemax}. 
\begin{lemma}\label{lem:alles}
Let $v \in L^2(\Omega)$ satisfy \eqref{eq:weaksecsol} for some $Q \in W^{2,p}(\Omega)$ $(p> n)$ with $Q \vert_\Gamma \not \equiv 0$. Then
\begin{equation}
\mathrm{(i)} \; \;  v \in W^{1,\infty}(\Omega), \qquad \mathrm{(ii)} \; \; \nabla v \in BV(\Omega), \qquad \mathrm{(iii)} \; \;  v \not \in C^1(\Omega).
\end{equation}
\end{lemma}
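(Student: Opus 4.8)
The plan is to remove the singular part of $v$ by subtracting the signed-distance ansatz of Lemma \ref{lem:distprep}. Concretely, I would set $h := v + \tfrac{Q}{2}|d_\Gamma|$ on $B_\epsilon(\Gamma)$. The very weak formulation \eqref{eq:weaksecsol} says precisely that $-\Delta v = Q\,\mathcal{H}^{n-1}\mres\Gamma$ distributionally, and testing against $C_0^\infty(B_\epsilon(\Gamma))$ (which embeds into the admissible test space by extension by zero) this identity persists on $B_\epsilon(\Gamma)$. Adding it to \eqref{eq:Laplace}, the singular measures cancel and one obtains $-\Delta h = -\mathrm{tr}(g) \in L^p(B_\epsilon(\Gamma))$ in $C_0^\infty(B_\epsilon(\Gamma))'$. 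This is the key point: the entire singular behaviour of $v$ near $\Gamma$ is carried by the explicit, well-understood function $\tfrac{Q}{2}|d_\Gamma|$.

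Next I would upgrade the regularity of $h$. By Remark \ref{rem:LaxMilgram} we already know $v \in W^{1,q}_0(\Omega)$ for every $q<\infty$, and $\tfrac{Q}{2}|d_\Gamma| \in W^{1,\infty}$ by Corollary \ref{cor:siigndist}(a), so $h \in W^{1,q}_{loc}(B_\epsilon(\Gamma))$. Since $\Delta h \in L^p_{loc}$, interior Calderón--Zygmund estimates give $h \in W^{2,p}_{loc}(B_\epsilon(\Gamma))$, and because $p>n$ the Sobolev embedding yields $h \in C^{1,\alpha}_{loc}(B_\epsilon(\Gamma))$ with $\alpha = 1 - n/p$. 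Away from $\Gamma$ the measure vanishes, so $v$ is harmonic and hence smooth on $\Omega'\cup\Omega''$; near $\partial\Omega$ the zero trace ($v \in W_0^{1,q}$) together with harmonicity gives smoothness up to $\partial\Omega$ by standard elliptic boundary regularity. Thus the only possible loss of smoothness sits on $\Gamma$, and there $v = h - \tfrac{Q}{2}|d_\Gamma|$.

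From this decomposition the three claims follow. For (i) and (ii), on a slightly smaller tube $\overline{B_{\epsilon/2}(\Gamma)} \subset\subset B_\epsilon(\Gamma)$ we have $h \in C^{1,\alpha} \subset W^{1,\infty}$ and $\nabla h \in W^{1,p}\subset BV$, while $\tfrac{Q}{2}|d_\Gamma| \in W^{1,\infty}$ with $\nabla(\tfrac{Q}{2}|d_\Gamma|)\in BV$ by Corollary \ref{cor:siigndist}(a),(b); hence $v = h - \tfrac{Q}{2}|d_\Gamma|$ inherits $v \in W^{1,\infty}$ and $\nabla v \in BV$ on $B_{\epsilon/2}(\Gamma)$. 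On the complement $\overline{\Omega}\setminus B_{\epsilon/4}(\Gamma)$, $v$ is smooth up to $\partial\Omega$ by the previous step. Covering the bounded set $\overline\Omega$ by these two regions and adding the finitely many contributions yields the global statements $v\in W^{1,\infty}(\Omega)$ and $\nabla v \in BV(\Omega)$. For (iii), $h \in C^1$ near $\Gamma$ whereas $\tfrac{Q}{2}|d_\Gamma|\notin C^1(B_\epsilon(\Gamma))$ by Corollary \ref{cor:siigndist}(c) (its gradient jumps by $Q\nu$ across $\Gamma$, which is nonzero at some point since $Q|_\Gamma\not\equiv 0$); therefore $\nabla v$ inherits a genuine jump across $\Gamma$ and $v \notin C^1(\Omega)$.

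The main obstacle is not any single estimate but rather setting up the cancellation correctly and then turning the resulting local information into global statements. In particular one must check that \eqref{eq:weaksecsol} and Lemma \ref{lem:distprep} are compatible on the common test space $C_0^\infty(B_\epsilon(\Gamma))'$ so that the measures truly cancel, and one must be careful that the elliptic regularity for $h$ is only interior (hence the passage to $B_{\epsilon/2}(\Gamma)$), while the up-to-the-boundary smoothness of $v$ comes from a separate harmonicity-plus-zero-trace argument near $\partial\Omega$. Once these are in place, (i)--(iii) are immediate consequences of the exact splitting $v = h - \tfrac{Q}{2}|d_\Gamma|$ into a $C^{1,\alpha}$ part and the explicit non-$C^1$ distance term.
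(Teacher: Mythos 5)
Your proposal takes essentially the same route as the paper: test \eqref{eq:weaksecsol} with $\phi\in C_0^\infty(B_\epsilon(\Gamma))$, add \eqref{eq:Laplace} so that the surface measures cancel, apply interior elliptic regularity to $h=v+\tfrac{Q}{2}|d_\Gamma|$, read off (i)--(iii) from the splitting $v=h-\tfrac{Q}{2}|d_\Gamma|$ together with Corollary \ref{cor:siigndist}, and finish with harmonicity of $v$ away from $\Gamma$. This matches the paper's proof step by step, and the local argument near $\Gamma$ is correct as written.

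The one step that does not hold as stated is your appeal to Remark \ref{rem:LaxMilgram} to conclude $v\in W_0^{1,q}(\Omega)$ for all $q<\infty$. That remark asserts the \emph{existence} of a solution of \eqref{eq:weaksecsol} lying in $W_0^{1,q}(\Omega)$; it says nothing about the \emph{given} $v\in L^2(\Omega)$, which in Lemma \ref{lem:alles} is an arbitrary very weak solution. To transfer the regularity you need uniqueness of very weak solutions, which is true but must be argued: for $f\in C_0^\infty(\Omega)$ solve $\Delta\phi=f$ with $\phi=0$ on $\partial\Omega$; since $\Omega$ is smooth, Schauder theory gives $\phi\in C^{2,\alpha}(\overline{\Omega})\cap W_0^{1,2}(\Omega)$, an admissible test function, so any two solutions of \eqref{eq:weaksecsol} agree a.e. This matters because your global statements (i), (ii) and your smoothness of $v$ up to $\partial\Omega$ rest on this zero-trace information. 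The paper circumvents the issue entirely: it never uses global $W_0^{1,q}$-regularity, but instead notes that $v$ is harmonic (hence smooth) on $\Omega\setminus\Gamma$, restricts it to $\partial B_{\epsilon/2}(\Gamma)$ to obtain $C^2$ boundary data $w$, and then treats $v$ as a solution of the Dirichlet problem on $\Omega\setminus B_{\epsilon/2}(\Gamma)$ with data $0$ on $\partial\Omega$ (encoded by the test space in \eqref{eq:weaksecsol}) and $w$ on $\partial B_{\epsilon/2}(\Gamma)$, so that elliptic regularity yields $v\in W^{2,q}$ up to $\partial\Omega$. A smaller remark in the same spirit: your preliminary step of securing $h\in W^{1,q}_{loc}$ before invoking Calder\'on--Zygmund is unnecessary (and leans on the same unproved uniqueness); interior elliptic regularity applies directly to distributional solutions in $L^2_{loc}$ with $L^p$ right-hand side, which is exactly how the paper proceeds.
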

\begin{proof}
Let $\phi \in C_0^\infty(B_\epsilon(\Gamma))$. Then by \eqref{eq:Laplace} we have
\begin{equation}
- \int v \Delta \phi \; \mathrm{d}x = \int_\Gamma Q \phi \; \mathrm{d}\mathcal{H}^{n-1} = \int \frac{Q}{2}|d_\Gamma| \Delta \phi \; \mathrm{d}x - \int_\Omega \mathrm{tr}(g) \phi \; \mathrm{d}x. 
\end{equation}
In particular, we have (distributionally in $C_0^\infty(B_\epsilon(\Gamma))'$) that $(-\Delta) (v + \frac{Q}{2}|d_\Gamma|) = \mathrm{tr}(g) \in L^p(B_\epsilon(\Gamma))$. We infer by elliptic regularity that $v + \frac{Q}{2}|d_\Gamma| \in W^{2,p}_{loc}(B_\epsilon(\Gamma))$. Let now $h \in W^{2,p}_{loc}(B_\epsilon(\Gamma))$ be such that $v = -\frac{Q}{2}|d_\Gamma| + h$. The fact that $h \in W^{2,p}_{loc} \subset C^1$ implies together with Corollary \ref{cor:siigndist} (a) that $v \in W^{1,\infty}_{loc}(B_\epsilon(\Gamma))$.  Moreover, $v \not \in C^1(B_\epsilon(\Gamma))$ is easily deduced by contradiction ---  If $v$ were  an element of $C^1(B_\epsilon(\Gamma))$, then also $v- h \in C^1(B_\epsilon(\Gamma))$. But $v-h =- \frac{Q}{2}|d_\Gamma| \in C^1(B_\epsilon(\Gamma))$ contradicts Corollary \ref{cor:siigndist} (c). This already implies (iii). Finally, $\nabla h\in W_{loc}^{1,p} \subset BV_{loc}$ and Corollary \ref{cor:siigndist} (b) can be used to show $v = - \frac{Q}{2} |d_\Gamma| + h \in BV_{loc}(B_\epsilon(\Gamma))$. Now observe by \eqref{eq:weaksecsol} that $v$ is weakly harmonic on $\Omega \setminus \Gamma$, implying that $v \in C^\infty(\Omega \setminus \Gamma)$. In particular $w := v \vert_{\partial B_{\frac{\epsilon}{2}}(\Gamma)} \in C^2(\partial B_{\frac{\epsilon}{2}}(\Gamma))$. Then $v$ solves on $\Omega \setminus B_{\frac{\epsilon}{2}}(\Gamma)$ (weakly) 
\begin{equation}
\begin{cases}
-\Delta v = 0 & \textrm{in $\Omega \setminus B_{\frac{\epsilon}{2}}(\Gamma)$}, \\ \quad \; \;  v = 0 & \textrm{on $\partial \Omega$}, \\ \quad \; \; v = w & \textrm{on $\partial  B_{\frac{\epsilon}{2}}(\Gamma)$.}
\end{cases}
\end{equation}
Elliptic regularity thus implies that $v \in W^{2,q}(\Omega\setminus B_{\frac{\epsilon}{2}}(\Gamma))$ for any $q< \infty$.  This together with the fact that $v \in W^{1,\infty}(B_{\frac{3\epsilon}{4}}(\Gamma))$ and $ \nabla v \in BV(B_{\frac{3\epsilon}{4}}(\Gamma))$ implies (i) and (ii).
\end{proof}

\begin{remark}\label{rem:nonttoC3}
From point (iii) in the previous lemma and \eqref{eq:Poiprob} it follows that each weak solution $u \in W^{2,2}(\Omega)$ of \eqref{eq:biharmeqmeas}  with $Q \vert_\Gamma \not\equiv 0$ satisfies $v = -\Delta u \not \in C^1(\Omega)$. In particular $u \not \in C^3(\Omega)$ unless $Q\vert_\Gamma \equiv 0$. 
\end{remark}

As a direct consequence of point (ii) in the previous lemma we obtain the regularity inheritance for the second order problem.

\begin{cor}
Let $S= S(\Gamma,p)$ be as in \eqref{eq:Subspacemax} with $p > n$. Then `$-\Delta u \in S$' inherits $\mathcal{M}_{loc}$-regularity with respect to $S$ (in the sense of Definition \ref{def:13}). 
\end{cor}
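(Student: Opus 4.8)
The plan is to reduce the claim for an arbitrary $u \in L^1_{loc}(\Omega)$ to the distinguished very weak solution produced by Lax--Milgram in Remark \ref{rem:LaxMilgram}, for which the $BV$-statement is already available from Lemma \ref{lem:alles}(ii). So let $u \in L^1_{loc}(\Omega)$ satisfy $-\Delta u = Q\,\mathcal{H}^{n-1}\mres\Gamma$ distributionally for some $Q \in W^{2,p}(\Omega)$, $p>n$; this is precisely the hypothesis `$-\Delta u \in S$' for $S = S(\Gamma,p)$ of \eqref{eq:Subspacemax}. First I would dispose of the degenerate case: if $Q\vert_\Gamma \equiv 0$ the right-hand side is the zero distribution, so $u$ is distributionally harmonic and hence smooth by Weyl's lemma, giving $\nabla u \in C^\infty(\Omega) \subset BV_{loc}(\Omega)$ immediately. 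Thus I may assume $Q\vert_\Gamma \not\equiv 0$, which is the standing hypothesis of Lemma \ref{lem:alles}.

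Next I would pick the very weak solution $v \in L^2(\Omega)$ of the Dirichlet problem \eqref{eq:Poiprob} constructed via Lax--Milgram (Remark \ref{rem:LaxMilgram}), so that $v$ satisfies \eqref{eq:weaksecsol} with the \emph{same} density $Q$. By Lemma \ref{lem:alles}(ii) we then have $\nabla v \in BV(\Omega)$, in particular $\nabla v \in BV_{loc}(\Omega)$. The crucial comparison step is to test both the distributional identity for $u$ and the weak formulation \eqref{eq:weaksecsol} for $v$ against an arbitrary $\phi \in C_0^\infty(\Omega)$; this is legitimate because $C_0^\infty(\Omega) \subset C^2(\overline{\Omega})\cap W_0^{1,2}(\Omega)$, so $\phi$ is an admissible test function in \eqref{eq:weaksecsol}. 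Subtracting the two identities, the surface integrals $\int_\Gamma Q\phi\,\mathrm{d}\mathcal{H}^{n-1}$ cancel and one is left with $\int_\Omega (u-v)\,\Delta\phi\,\mathrm{d}x = 0$ for every $\phi \in C_0^\infty(\Omega)$, i.e. $\Delta(u-v) = 0$ in the sense of distributions on $\Omega$.

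At this point Weyl's lemma gives that $u - v$ coincides almost everywhere with a harmonic, hence $C^\infty$, function $w$ on $\Omega$. Writing $\nabla u = \nabla w + \nabla v$ then exhibits $\nabla u$ as the sum of a smooth vector field and a $BV_{loc}$ vector field, so $\nabla u \in BV_{loc}(\Omega)$, which is equivalent to $D^2 u \in \mathcal{M}_{loc}(\Omega)$. This is exactly the inheritance property required in Definition \ref{def:13}, completing the argument.

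The argument is essentially bookkeeping once Lemma \ref{lem:alles}(ii) is in hand; the only genuine idea is the reduction step, where Weyl's lemma transfers the $BV$-regularity from the particular zero-boundary-value solution $v$ to an arbitrary $u$ sharing the same distributional Laplacian, since the difference of any two such solutions is smooth. I do not expect a real obstacle beyond verifying that $C_0^\infty(\Omega)$ is a common admissible test class for both the hypothesis `$-\Delta u \in S$' and the weak formulation \eqref{eq:weaksecsol}, and that the very weak solution $v$ of \eqref{eq:Poiprob} supplied by Remark \ref{rem:LaxMilgram} is exactly the object to which Lemma \ref{lem:alles} applies.
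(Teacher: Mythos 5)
Your proposal is correct and follows essentially the same route as the paper: compare $u$ with the Lax--Milgram solution $v$ of \eqref{eq:Poiprob}, apply Lemma \ref{lem:alles}(ii) to $v$, and use Weyl's lemma on the weakly harmonic difference $u-v$ to conclude $\nabla u \in BV_{loc}(\Omega)$. Your separate treatment of the degenerate case $Q\vert_\Gamma \equiv 0$ is a small extra precaution (the paper applies Lemma \ref{lem:alles} without commenting on its standing hypothesis $Q\vert_\Gamma \not\equiv 0$), but it does not change the substance of the argument.
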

\begin{proof}
Let $w \in L^1_{loc}(\Omega)$ be such that (distributionally) $-\Delta w \in S$, i.e. there exists $Q \in W^{2,p}(\Omega)$ such that $-\Delta w = Q \; \mathcal{H}^{n-1} \mres \Gamma$. By Remark \ref{rem:LaxMilgram} we have the existence of some $v \in W_0^{1,2}(\Omega)\subset L^2(\Omega)$ weak solution of 
\begin{equation}
\begin{cases} 
-\Delta v = Q \; \mathcal{H}^{n-1} \mres \Gamma  & \textrm{in $\Omega$} \\ \quad \; \; v = 0 & \textrm{on $\partial \Omega$}
\end{cases}.
\end{equation}
Lemma \ref{lem:alles} now implies that $\nabla v \in BV(\Omega)$. Since $w-v \in L^1_{loc}(\Omega)$ is weakly harmonic in $\Omega$ we also deduce that $\nabla (w-v) \in BV_{loc}(\Omega)$. These observations imply $\nabla w \in BV_{loc}(\Omega)$ and therefore $D^2 v \in \mathcal{M}_{loc}(\Omega)$. The claim follows.  
\end{proof}

We will also need a lemma about (local) Lipschitz-regularity of distributional solutions to $-\Delta w = Q \; \mathcal{H}^{n-1} \mres \Gamma$.
\begin{lemma}\label{lem:LocLip}
Let $D \subset \mathbb{R}^n$ be any $C^2$-domain, $\Gamma = \partial D' \in C^2$ for some $D' \subset \subset D$ and $w \in L^1_{loc}(D)$ satisfy (distributionally in $C_0^\infty(D)'$) 
\begin{equation}
   - \Delta w = Q \; \mathcal{H}^{n-1} \mres \Gamma, 
\end{equation}
for some $Q \in W^{2,p}(D)$, $(p>n)$. Then $w \in W^{1,\infty}_{loc}(D).$
\end{lemma}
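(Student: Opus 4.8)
The plan is to localize the argument of Lemma \ref{lem:alles}, exploiting that here we only claim interior (and not up-to-the-boundary) regularity, so no boundary conditions on $w$ are needed. Since $\Gamma = \partial D' \in C^2$ with $D' \subset\subset D$, I can fix $\epsilon > 0$ small enough that the tubular neighborhood $B_\epsilon(\Gamma) \subset\subset D$ enjoys all the properties of \cite[Appendix 14.6]{GilTru} used above, and then apply Lemma \ref{lem:distprep} and Corollary \ref{cor:siigndist} verbatim with $D, D', D''$ in place of $\Omega, \Omega', \Omega''$; this is legitimate because those statements are purely local identities in $C_0^\infty(B_\epsilon(\Gamma))'$.

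First I would treat the neighborhood of $\Gamma$. By Lemma \ref{lem:distprep}, the auxiliary function $f := w + \tfrac{Q}{2}|d_\Gamma|$ satisfies, distributionally in $C_0^\infty(B_\epsilon(\Gamma))'$,
\begin{equation}
-\Delta f = -\Delta w - \Delta\left(\tfrac{Q}{2}|d_\Gamma|\right) = Q\,\mathcal{H}^{n-1}\mres\Gamma - Q\,\mathcal{H}^{n-1}\mres\Gamma - \mathrm{tr}(g) = -\mathrm{tr}(g),
\end{equation}
where $g \in L^p(B_\epsilon(\Gamma); \mathbb{R}^{n\times n})$, so that $\mathrm{tr}(g) \in L^p(B_\epsilon(\Gamma))$. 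Since $\tfrac{Q}{2}|d_\Gamma| \in W^{1,\infty}(B_\epsilon(\Gamma)) \subset L^1_{loc}$ and $w \in L^1_{loc}$, we have $f \in L^1_{loc}(B_\epsilon(\Gamma))$ with $\Delta f \in L^p_{loc}$. Interior elliptic regularity — for instance, subtracting a local Newtonian potential of $-\mathrm{tr}(g)$ and applying Weyl's lemma to the harmonic remainder — then gives $f \in W^{2,p}_{loc}(B_\epsilon(\Gamma))$. As $p > n$ the embedding $W^{2,p} \hookrightarrow C^1$ yields $f \in C^1_{loc}$, and combining with Corollary \ref{cor:siigndist}(a) gives
\begin{equation}
w = f - \tfrac{Q}{2}|d_\Gamma| \in W^{1,\infty}_{loc}(B_\epsilon(\Gamma)).
\end{equation}

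Away from $\Gamma$, the equation reduces to $-\Delta w = 0$ in $C_0^\infty(D\setminus\Gamma)'$, so Weyl's lemma gives $w \in C^\infty(D\setminus\Gamma)$, in particular $w \in W^{1,\infty}_{loc}(D\setminus\Gamma)$. Since the open sets $B_\epsilon(\Gamma)$ and $D\setminus\Gamma$ together cover $D$, every point of $D$ has a neighborhood on which $w$ is $W^{1,\infty}$, which is precisely the assertion $w \in W^{1,\infty}_{loc}(D)$.

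I do not anticipate a genuine obstacle: the statement is essentially the interior part of Lemma \ref{lem:alles}, and the distance-function decomposition of Lemma \ref{lem:distprep} does the decisive work. The only points requiring minor care are that the Gilbarg–Trudinger tubular-neighborhood construction transfers from $\Omega$ to the general $C^2$-domain $D$, and that the elliptic regularity step is valid for the merely $L^1_{loc}$ function $f$ — which is why I use the Newtonian-potential-plus-Weyl argument rather than the Lax–Milgram variational solution employed earlier. The case $Q\vert_\Gamma \equiv 0$ needs no separate treatment, since then $w$ is globally harmonic and the conclusion is immediate.
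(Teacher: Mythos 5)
Your proof is correct, but it takes a genuinely different route from the paper's. The paper does \emph{not} re-run the local distance-function argument: instead it reduces Lemma \ref{lem:LocLip} to the already-proved global statement (Lemma \ref{lem:alles}) by an approximation-of-domains device. Concretely, it considers the inner parallel sets $D_\delta := \{x \in D : \mathrm{dist}(x, D^C) > \delta\}$, invokes Doktor's theorem \cite{Doktor} to sandwich a $C^\infty$-smooth domain $U_{k_0}$ with $D_\delta \subset U_{k_0} \subset D$, solves the measure-valued Dirichlet problem $-\Delta \tilde w = Q\,\mathcal{H}^{n-1}\mres\Gamma$, $\tilde w = 0$ on $\partial U_{k_0}$ (Lemma \ref{lem:alles} giving $\tilde w \in W^{1,\infty}(U_{k_0})$), and then observes that $\bar w := w - \tilde w$ is distributionally harmonic on $D_\delta$, hence smooth, so $w \in W^{1,\infty}(D_{2\delta})$; letting $\delta \downarrow 0$ exhausts $D$. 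The smooth approximating domain is needed there because Lemma \ref{lem:alles} (and the Lax--Milgram existence behind it) is only formulated on smooth domains. Your argument instead inlines the mechanism that powers Lemma \ref{lem:alles}: you apply Lemma \ref{lem:distprep} directly (legitimately, since its proof is a local integration by parts in $C_0^\infty(B_\epsilon(\Gamma))'$ and transfers verbatim to $D, D', D''$), get $-\Delta\bigl(w + \tfrac{Q}{2}|d_\Gamma|\bigr) \in L^p_{loc}(B_\epsilon(\Gamma))$, upgrade by interior elliptic regularity (your Newtonian-potential-plus-Weyl step is the right justification for a merely $L^1_{loc}$ solution), and patch with Weyl's lemma on $D \setminus \Gamma$. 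What your route buys is self-containedness and locality: no Doktor approximation, no auxiliary boundary-value problem, no existence theory, and it makes transparent that the statement is purely local. What the paper's route buys is brevity given the established toolkit --- it reuses Lemma \ref{lem:alles} as a black box rather than repeating its local core --- at the price of the domain-approximation machinery. Both proofs ultimately rest on the same comparison with $\tfrac{Q}{2}|d_\Gamma|$, and your handling of the degenerate case $Q\vert_\Gamma \equiv 0$ and of the covering argument is sound.
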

\begin{proof}
For $\delta > 0$ consider $D_\delta := \{ x \in D : \mathrm{dist}(x,D^C) > \delta\}$. By \cite[Appendix 14.6]{GilTru} $D_\delta$ is a $C^2$-domain for all $\delta \in (0, \delta_0)$ small enough. Possibly choosing $\delta_0 > 0$ smaller, one can also achieve that $\Gamma \subset \subset D_{2\delta}$ for all $\delta \in (0,\delta_0)$.  Fix some arbitrary $\delta \in (0, \frac{\delta_0}{2})$. By \cite[Theorem 5.1]{Doktor} there exists some sequence $(U_k)_{k \in \mathbb{N}}$ of domains such that $U_k \supset D_\delta$ has $C^\infty$-smooth boundary and $U_k$ approximates $D_\delta$ in a suitable sense (whose precise definition will not be needed here). With \cite[Theorem 5.1]{Doktor} one also obtains $U_k \subset D$ for some $k \geq k_0$ large enough. Next consider the measure-valued Dirichlet problem 
\begin{equation}\label{eq:wschlaange}
    \begin{cases}
    -\Delta \tilde{w} = Q \; \mathcal{H}^{n-1} \mres \Gamma & \textrm{on} \; U_{k_0} \\ \quad \tilde{w} = 0 & \textrm{on} \; \partial U_{k_0}. 
    \end{cases}
\end{equation}
By Lemma \ref{lem:alles}, \eqref{eq:wschlaange} has a solution $\tilde{w} \in W^{1,\infty}(U_{k_0})$. In particular $\tilde{w}$ is also Lipschitz continuous on $\overline{D_{\delta}}$. Now note that  $\bar{w} := w-\tilde{w}$ satisfies $-\Delta \bar{w} = 0$ distributionally on $D_{\delta}$, meaning that $\bar{w} \in C^\infty(\overline{D_{2\delta}})$. This implies that $w = \tilde{w} + \bar{w} \in W^{1,\infty}(D_{2\delta})$. Since $\delta \in (0,\frac{\delta_0}{2})$ was arbitrary the claim follows. 
\end{proof}

\subsection{SBV-Regularity}

We first briefly report on the definition of the space $SBV$. Recall that for each $BV$-function $w \in BV(\Omega)$ there exists a finite signed vector-valued Radon measure $\mu_w$ on $\Omega$ such that 
\begin{equation}
    \int_\Omega w \; \mathrm{div}(\phi) \; \mathrm{d}x = \int_\Omega \phi \; \mathrm{d}\mu_w \quad \forall \phi \in C^1_c(\Omega;\mathbb{R}^n).
\end{equation}
Now $\mu_u$ can be decomposed (cf. \cite[Section 5.1]{EvGar}) into two finite signed vector-valued measures $\mu_w = \mu_w^a + \mu_w^s$, where $\mu_w^a$ is absolutely continuous with respect to the Lebesgue measure and $\mu_w^s$ is singular with respect to the Lebesgue measure on $\Omega$. 

Further we define for each Lebesgue-measurable $f : \Omega \rightarrow \mathbb{R}$  the \emph{approximate limits}
\begin{equation}
    f_+(x) := \inf \left\lbrace t \in \mathbb{R} : \lim_{r \rightarrow 0 } \frac{|\{ f > t \}\cap B_r(x)|}{r^n} = 0 \right\rbrace 
\end{equation}
and 
\begin{equation}
    f_-(x) := \sup \left\lbrace t \in \mathbb{R} : \lim_{r \rightarrow 0 } \frac{|\{ f < t \}\cap B_r(x)|}{r^n} = 0 \right\rbrace. 
\end{equation}
It can be shown that $f_+,f_-$ are Borel measurable and $f_+ \geq f_-$, cf. \cite[Chapter 5, Lemma 5.6]{EvGar}.
We define the \emph{jump set} of such $f$ to be the Borel set
\begin{equation}
    J_f := \{x \in \Omega : f_+(x) > f_-(x) \}. 
\end{equation}
If $w \in BV(\Omega; \mathbb{R})$ then \cite[Chapter 5, Theorem 5.17]{EvGar} implies that $J_w$ is countably $(n-1)$-rectifiable. Now we are ready to define the space $SBV(\Omega)$.
\begin{definition}[The space $SBV(\Omega)$]
We say $w \in BV(\Omega)$ lies in $SBV(\Omega)$ if $\mu_w^s ( \Omega \setminus J_w) = 0$.
\end{definition}

We will make use the following characterization lemma of $SBV$ which is frequently used in the study of free discontinuity problems

\begin{lemma}[{cf. \cite[Lemma 2.3]{DeGiorgi}}]\label{lem:DeGiorgi}
Let $w \in L^\infty(\Omega)$ and $K \subset \Omega$ be (relatively) closed such that $w \in C^1(\Omega \setminus K) \cap BV(\Omega)$ and $\mathcal{H}^{n-1}(K) < \infty$. Then $w \in SBV(\Omega)$ and $J_w \subset K$. 
\end{lemma}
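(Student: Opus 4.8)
The plan is to invoke the fine structure theory of $BV$ functions (as in \cite[Section 5.1]{EvGar}) and reduce the statement to showing that the Cantor part of the derivative measure vanishes. Recall that for $w \in BV(\Omega)$ the measure $\mu_w$ splits as $\mu_w = \mu_w^a + \mu_w^s$ with $\mu_w^a = \nabla w \, \mathcal{L}^n$, and that the singular part decomposes further as $\mu_w^s = \mu_w^j + \mu_w^c$, where the jump part $\mu_w^j$ is concentrated on the rectifiable jump set $J_w$ and $\mu_w^c$ is the diffuse Cantor part. The whole assertion then amounts to proving $\mu_w^c \equiv 0$ (which gives $\mu_w^s(\Omega \setminus J_w)=0$, i.e. $w \in SBV(\Omega)$) together with the inclusion $J_w \subset K$. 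The hypothesis $w \in L^\infty(\Omega)$ enters only to guarantee that the approximate limits $f_+, f_-$ are finite, so that $J_w$ is well-defined.

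First I would localize on $\Omega \setminus K$. Since $K$ is relatively closed, $\Omega \setminus K$ is open, and $w \in C^1(\Omega \setminus K)$; integrating by parts against test fields $\phi \in C^1_c(\Omega \setminus K; \mathbb{R}^n)$ shows that $\mu_w \mres (\Omega \setminus K) = \nabla w \, \mathcal{L}^n \mres (\Omega \setminus K)$ is absolutely continuous. Consequently $|\mu_w^s|(\Omega \setminus K) = 0$, so the singular part, and in particular $\mu_w^c$, is concentrated on $K$. For the inclusion $J_w \subset K$ I would argue pointwise: at any $x_0 \in \Omega \setminus K$ continuity of $w$ on a neighborhood forces $f_+(x_0) = f_-(x_0) = w(x_0)$, because for $t > w(x_0)$ the superlevel set $\{w > t\}$ has vanishing density at $x_0$ while for $t < w(x_0)$ it has full density (and symmetrically for $f_-$). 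Hence $x_0 \notin J_w$, giving $J_w \subset K$.

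The crux is then the vanishing of the Cantor part. The key structural fact is that for any $BV$ function the measure $\mu_w^c$ charges no Borel set that is $\sigma$-finite with respect to $\mathcal{H}^{n-1}$; since $\mathcal{H}^{n-1}(K) < \infty$ by hypothesis, this yields $|\mu_w^c|(K) = 0$. Combining this with the first step, $|\mu_w^c|(\Omega \setminus K) \leq |\mu_w^s|(\Omega \setminus K) = 0$, we conclude $|\mu_w^c|(\Omega) = 0$, so $\mu_w^s = \mu_w^j$ is concentrated on $J_w$ and $w \in SBV(\Omega)$.

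The main obstacle is precisely this last structural input --- that the Cantor part vanishes on sets of finite $\mathcal{H}^{n-1}$-measure. This is the genuine geometric-measure-theoretic content (indeed it is the very property that makes the definition of $SBV$ natural) and is exactly the statement recorded in \cite[Lemma 2.3]{DeGiorgi}; the remaining two steps are routine bookkeeping with the decomposition of $\mu_w$ and with the definition of the jump set. Accordingly, in the write-up I would either cite this vanishing property directly or deduce it from the blow-up characterization of Cantor points, and otherwise keep the argument self-contained.
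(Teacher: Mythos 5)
Your proof is correct, but note that the paper does not actually prove this lemma: it is quoted verbatim from De Giorgi--Carriero--Leaci \cite{DeGiorgi}, so the comparison is with that reference rather than with an argument in the paper itself. Your reduction is sound: the localization step (on the open set $\Omega \setminus K$ the $C^1$-regularity forces $\mu_w \mres (\Omega\setminus K)$ to be absolutely continuous with density the classical gradient, and continuity at each $x_0 \in \Omega\setminus K$ gives $f_+(x_0)=f_-(x_0)=w(x_0)$, whence $J_w \subset K$) is routine and correct, and the conclusion then hinges exactly where you say it does: on the structural fact that the Cantor part $\mu_w^c$ of a $BV$ derivative vanishes on every Borel set that is $\sigma$-finite with respect to $\mathcal{H}^{n-1}$ (this is Proposition 3.92 in Ambrosio--Fusco--Pallara, \emph{Functions of Bounded Variation and Free Discontinuity Problems}, or can be extracted from the blow-up characterization of Cantor points). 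Since that fact is itself a substantial piece of the fine theory of $BV$ functions --- comparable in depth to the cited lemma --- your write-up is a proof modulo that one citation, which you acknowledge explicitly; this is legitimate, and it buys something the paper's bare citation does not: it isolates precisely which hypothesis does what ($C^1$-regularity off $K$ kills the singular part away from $K$ and localizes the jump set, while finiteness of $\mathcal{H}^{n-1}(K)$ kills the Cantor part on $K$), and it reveals that the $L^\infty$ assumption is not actually needed once $w \in BV(\Omega)$ is assumed, since the approximate limits of a $BV$ function are finite $\mathcal{H}^{n-1}$-almost everywhere in any case.
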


\begin{cor}\label{cor:33}
Let $\Gamma = \partial \Omega' \in C^2$ and $Q \in W^{2,p}(\Omega)$, $p > n$. Further, let $v \in L^1(\Omega)$ be a weak solution of
\begin{equation}
\begin{cases}
-\Delta v = Q \; \mathcal{H}^{n-1} \mres \Gamma & \textrm{in }\Omega, \\ \quad \; \; v = 0 & \textrm{on } \partial \Omega.
\end{cases}
\end{equation} 
Then $v \in W^{1,\infty}(\Omega)$ and  $\nabla v \in SBV(\Omega)$. Moreover, $J_{\partial_i v} \subset  \Gamma$ for all $i = 1,...,n$.
\end{cor}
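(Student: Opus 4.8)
The plan is to read off the two scalar statements — $W^{1,\infty}$-regularity and $BV$-regularity of $\nabla v$ — directly from Lemma \ref{lem:alles}, and then to upgrade the $BV$-information on each partial derivative $\partial_i v$ to $SBV$ by invoking the De Giorgi characterization Lemma \ref{lem:DeGiorgi} with the explicit candidate jump set $K = \Gamma$. Before doing so I would dispatch the trivial case: if $Q\vert_\Gamma \equiv 0$ then the right-hand side measure $Q \; \mathcal{H}^{n-1} \mres \Gamma$ vanishes, so $v$ is weakly harmonic with zero boundary data and hence $v \equiv 0$, for which all three conclusions are immediate. Thus I may assume $Q\vert_\Gamma \not\equiv 0$, which is precisely the hypothesis under which Lemma \ref{lem:alles} applies.

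With that reduction, Lemma \ref{lem:alles} gives at once $v \in W^{1,\infty}(\Omega)$ (conclusion (i)) and $\nabla v \in BV(\Omega)$ (conclusion (ii)), so in particular each component $\partial_i v$ lies in $L^\infty(\Omega) \cap BV(\Omega)$. The remaining ingredient I need is interior smoothness away from $\Gamma$: since \eqref{eq:weaksecsol} forces $v$ to be weakly harmonic on the open set $\Omega \setminus \Gamma$ (the test functions there do not see the surface measure), Weyl's lemma yields $v \in C^\infty(\Omega \setminus \Gamma)$, and therefore $\partial_i v \in C^1(\Omega \setminus \Gamma)$ for every $i$. I would also record that $K := \Gamma = \partial\Omega'$ is compact, hence relatively closed in $\Omega$, and that, being a compact $C^2$-submanifold of dimension $n-1$, it satisfies $\mathcal{H}^{n-1}(\Gamma) < \infty$.

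Now I would apply Lemma \ref{lem:DeGiorgi} separately to each $w := \partial_i v$ with the closed set $K = \Gamma$: the four hypotheses $\partial_i v \in L^\infty(\Omega)$, $\partial_i v \in C^1(\Omega \setminus \Gamma)$, $\partial_i v \in BV(\Omega)$, and $\mathcal{H}^{n-1}(\Gamma) < \infty$ have all just been verified. The lemma then yields $\partial_i v \in SBV(\Omega)$ together with $J_{\partial_i v} \subset \Gamma$ for each $i = 1, \dots, n$, which is exactly the asserted inclusion; since $SBV$-membership of the vector field $\nabla v$ means membership of every component, this gives $\nabla v \in SBV(\Omega)$ and completes the proof. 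I do not anticipate a genuine obstacle here: the argument is purely a matter of checking the hypotheses of Lemma \ref{lem:DeGiorgi}, and the only point requiring a word of care is the componentwise reading of ``$\nabla v \in SBV(\Omega)$'', together with the observation that the interior elliptic regularity away from $\Gamma$ supplies the $C^1(\Omega \setminus \Gamma)$-hypothesis that the De Giorgi lemma demands.
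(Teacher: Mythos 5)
Your proof is correct and follows essentially the same route as the paper's: Lemma \ref{lem:alles} supplies the $W^{1,\infty}$- and $BV$-regularity, harmonicity of $v$ away from $\Gamma$ supplies the $C^1(\Omega\setminus\Gamma)$-hypothesis, and Lemma \ref{lem:DeGiorgi} applied componentwise to $w=\partial_i v$ with $K=\Gamma$ yields $\nabla v \in SBV(\Omega)$ and $J_{\partial_i v}\subset\Gamma$. Your preliminary reduction to the case $Q\vert_\Gamma\not\equiv 0$ is a small but legitimate refinement the paper's proof omits (Lemma \ref{lem:alles} formally requires that hypothesis), and you dispatch the degenerate case correctly.
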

\begin{proof}
We intend to apply Lemma \ref{lem:DeGiorgi}.
By Lemma \ref{lem:alles} one has that $v \in W^{1,\infty}(\Omega)$ (i.e. $\nabla v \in L^\infty(\Omega)$) and also from Lamma \ref{lem:alles} we conclude that $\nabla v \in BV(\Omega)$. Clearly $v$ lies also in $C^1(\Omega \setminus \Gamma)$ as $v$ is (weakly) harmonic on $\Omega \setminus \Gamma$. Since $\Gamma \subset \subset \Omega$ is a closed $C^1$-submanifold, we infer also that $\mathcal{H}^{n-1}(\Gamma) < \infty$. Hence, Lemma \ref{lem:DeGiorgi} is applicable and the claim follows. 
\end{proof}

\section{Optimal regularity (Proof of Theorem \ref{thm:optireg})}  We will prove the theorem by showing that for all $u \in W^{2,2}(\Omega)$ as in Theorem \ref{thm:optireg} and $i,j \in \{ 1,...,n \}$ the function $\partial^2_{ij} u \in L^2(\Omega)$ lies actually in $W^{1,\infty}(\Omega)$. The reason for that will be that it solves a suitable measure-valued \emph{auxiliary problem} of second order. Together with our results from the previous section this will yield the desired regularity. 

We have already discussed in \eqref{eq:Poiprob} and thereafter that $\Delta u \in W^{1,\infty}(\Omega) \cap C_0(\overline{\Omega})$ (meaning  also $\Delta u = 0$ on $\partial \Omega$) and $u \in W^{3,q}(\Omega)$ for all $q \in [1,\infty)$. Next we turn to some local smoothness properties on $\Omega \setminus \Gamma$.
\begin{lemma}\label{lem:LocBiharm}
Let $u \in W^{2,2}(\Omega)$ be a weak solution of \eqref{eq:biharmeqmeas}. Then $u \in C^\infty( \Omega \setminus \Gamma)$ and $(-\Delta)^2 u = 0$ on $\Omega \setminus \Gamma$. Moreover, $u \in C^\infty(\overline{\Omega} \cap N)$ for an open neighborhood $N$ of $\partial \Omega$. 
\end{lemma}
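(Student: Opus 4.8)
The plan is to reduce everything to second-order elliptic regularity applied first to $v := -\Delta u$ and then to $u$ itself. Recall from the discussion following \eqref{eq:Poiprob} that $v = -\Delta u$ is a very weak solution of the model problem \eqref{eq:Poiprob}, and that by Lemma \ref{lem:alles} one has $v \in W^{1,\infty}(\Omega) \cap C_0(\overline{\Omega})$; in particular $v$ attains the boundary value $0$ continuously on $\partial\Omega$.

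For the interior statement I would test the weak formulation \eqref{eq:Navier} (equivalently \eqref{eq:weaksecsol}) against arbitrary $\phi \in C_c^\infty(\Omega \setminus \Gamma)$. Since such $\phi$ is supported away from $\Gamma$, the right-hand side $\int_\Gamma Q \phi \, \mathrm{d}\mathcal{H}^{n-1}$ vanishes, so $v$ is weakly harmonic on $\Omega \setminus \Gamma$. By Weyl's lemma (hypoellipticity of $-\Delta$) this gives $v \in C^\infty(\Omega \setminus \Gamma)$. Then $-\Delta u = v \in C^\infty(\Omega \setminus \Gamma)$, and interior elliptic regularity for the Laplacian upgrades $u$ to $C^\infty(\Omega \setminus \Gamma)$. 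The identity $(-\Delta)^2 u = \Delta(\Delta u) = -\Delta v = 0$ then holds classically on $\Omega \setminus \Gamma$.

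For the behaviour near $\partial\Omega$ I would exploit that $\Gamma \subset \subset \Omega$: since $\Gamma$ and $\partial\Omega$ are disjoint compact sets, there is a tubular neighborhood $N$ of $\partial\Omega$ with $\overline{N}\cap \Gamma = \emptyset$. On the smooth collar $\Omega \cap N$ the function $v$ is harmonic, equals $0$ on the (smooth) portion $\partial\Omega$ of the boundary, and coincides with the already-smooth function from the interior step on the inner boundary $\partial N \cap \Omega$. Thus $v$ solves a Dirichlet problem on a smooth domain with smooth boundary data, and boundary Schauder estimates give $v \in C^\infty(\overline{\Omega}\cap N')$ on a slightly smaller collar $N'$. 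Feeding this back into $-\Delta u = v$ on $\Omega \cap N'$ with the smooth Dirichlet data $u = u_0 \in C^\infty(\overline{\Omega})$ on $\partial\Omega$, the same boundary regularity theory yields $u \in C^\infty(\overline{\Omega} \cap N)$ after shrinking $N$ once more.

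The two applications of interior and boundary elliptic regularity are routine; the one step that needs a little care is organising the boundary argument as the two-step bootstrap $v \mapsto u$ and checking that the inner boundary $\partial N \cap \Omega$ of the collar already lies in the region $\Omega\setminus\Gamma$ where smoothness is known, so that all Dirichlet data in play is genuinely smooth up to the boundary. I expect no essential difficulty once the disjointness $\overline{N} \cap \Gamma = \emptyset$ has been fixed.
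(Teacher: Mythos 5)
Your proposal is correct and follows essentially the same route as the paper: testing against $\phi \in C_0^\infty(\Omega\setminus\Gamma)$ plus Weyl's lemma for the interior statement, and then solving a Dirichlet problem for $v=\Delta u$ (with data $0$ on $\partial\Omega$ and smooth data on a smooth inner hypersurface avoiding $\Gamma$) followed by the bootstrap back to $u$ for the boundary statement. The only cosmetic difference is that the paper realizes the collar as $\Omega\setminus\overline{D}$ for a smooth subdomain $D\supset\supset\Gamma$ obtained by approximation, whereas you take a tubular neighborhood of $\partial\Omega$; you also spell out the final $v\mapsto u$ regularity step that the paper leaves implicit.
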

\begin{proof}
Notice that for each $\phi \in C_0^\infty(\Omega \setminus \Gamma)$ one has by \eqref{eq:Navier} that 
\begin{equation}
    \int_\Omega \Delta u \Delta \phi \; \mathrm{d}x = \int_\Gamma Q \phi \; \mathrm{d}\mathcal{H}^{n-1} = 0. 
\end{equation}
In particular $\Delta u$ is weakly harmonic on $\Omega \setminus \Gamma$. This implies that $\Delta u$ lies in $C^\infty(\Omega \setminus \Gamma)$ and is harmonic on $\Omega \setminus \Gamma$. Elliptic regularity now implies that $u \in C^\infty(\Omega \setminus \Gamma)$ and harmonicity of $\Delta u$ yields $\Delta^2 u = \Delta(\Delta u) = 0$. Now let $D \subset \subset \Omega$ be a smooth subdomain such that $\Gamma \subset \subset D \subset \subset \Omega$. [Such subdomain clearly exists since one can observe that for $\epsilon > 0$ small enough the subdomain $\Omega_\epsilon := \{ x \in \mathbb{R}^n : \mathrm{dist}(x,\Omega^C) > \epsilon \}$ has $C^2$-boundary and can hence be appoximated by smooth domains with the same construction as in the Lemma \ref{lem:LocLip}]. Define $\varphi := (\Delta u) \vert_{\partial D} \in C^\infty(\partial D)$. 
Notice that $\Omega \setminus D$ is a smooth domain and a subset of $\Omega \setminus \Gamma$. Therefore, $v := \Delta u$ is harmonic on $\Omega \setminus D$. Moreover $v$ is continuous on $\overline{\Omega \setminus D}$ with boundary values $v = 0$ in $\partial \Omega$ and on $\partial D$ one has $v= \varphi \in C^\infty(\partial D)$. In particular $v$ is smooth on $\partial ( \Omega \setminus D) $. Elliptic regularity and smoothness of $\Omega \setminus D$ imply then that $v \in C^\infty( \overline{\Omega \setminus D}).$ The claim follows choosing $N := \mathbb{R}^n \setminus \overline{D}$.
\end{proof}

Finally we are able to find the desired auxiliary equation for $\partial^2_{ij} u$ and prove Theorem \ref{thm:optireg}.

\begin{proof}[Proof of Theorem \ref{thm:optireg}]
Let $u$ be as in the statement and $1 \leq i,j \leq n$. We first derive a distributional equation for $\partial^2_{ij} u$ on $C_0^\infty(B_\epsilon(\Gamma))'$.
To this end let $\phi \in C_0^\infty(B_\epsilon(\Gamma))$ be arbitrary. Then several integrations by parts yield
\begin{align}
    \int \partial^2_{ij} u \Delta \phi \; \mathrm{d}x  & = \int u \Delta (\partial^2_{ij} \phi ) \; \mathrm{d}x = \int \Delta u \partial^2_{ij} \phi \; \mathrm{d}x
    \\ & =  \int  \left( \Delta u - \frac{Q}{2} |d_\Gamma| \right) \partial^2_{ij} \phi \; \mathrm{d}x + \int \left( \frac{Q}{2} |d_\Gamma| \right) \partial^2_{ij}\phi \; \mathrm{d}x. \label{eq:prfintermediate} 
\end{align}
Recall that distributionally in $C_0^\infty(B_\epsilon(\Gamma))'$ one has
\begin{equation}
    (-\Delta) (-\Delta u) = (-\Delta)^2 u = Q \mathcal{H}^{n-1} \mres \Gamma 
\end{equation}
 and 
\begin{equation}
   (- \Delta) \left( \frac{Q}{2} |d_\Gamma| \right) = - Q \; \mathcal{H}^{n-1}\mres \Gamma - \mathrm{tr}(g),
\end{equation}
where $g= (g_{ij})$ is as in Lemma \ref{lem:distprep}. We conclude that (again in the sense of distributions in $C_0^\infty(B_\epsilon(\Gamma))'$) one has 
\begin{equation}
   (-\Delta) \left( \Delta u - \frac{Q}{2}|d_\Gamma| \right)  =  \mathrm{tr}(g)  \in L^p(B_\epsilon(\Gamma)). 
\end{equation}
Therefore, by elliptic regularity $h := \Delta u - \frac{Q}{2} |d_\Gamma|$ lies in $W^{2,p}_{loc}(B_\epsilon(\Gamma))$. Using this and Lemma \ref{lem:distprep} once more we infer from \eqref{eq:prfintermediate} 
\begin{align}
    \int \partial^2_{ij} u \Delta \phi \; \mathrm{d}x & = \int h \partial^2_{ij} \phi \; \mathrm{d}x  + \int \left( \frac{Q}{2} |d_\Gamma| \right) \partial^2_{ij}\phi \; \mathrm{d}x \\ & = \int (\partial^2_{ij} h) \phi \; \mathrm{d}x + \int_\Gamma Q \nu_i \nu_j \phi \; \mathrm{d}\mathcal{H}^{n-1} + \int g_{ij} \phi \; \mathrm{d}x.
\end{align}
We infer that (distributionally in $C_0^\infty(B_\epsilon(\Gamma))'$) one has 
\begin{equation}\label{eq:deltaDij}\tag{AUX1}
    \Delta(\partial^2_{ij} u) = \nu_i \nu_j Q \; \mathcal{H}^{n-1} \mres \Gamma + (\partial^2_{ij} h + g_{ij}). 
\end{equation}
 Notice that $\partial^2_{ij}h + g_{ij} \in L^p_{loc}(B_\epsilon(\Gamma))$. Therefore we can decompose the second derivative on $B_\epsilon(\Gamma)$ as a sum of two functions, namely $\partial^2_{ij}u = w_1+w_2$ where $w_1 \in L^1_{loc}(B_\epsilon(\Gamma))$ solves (distributionally in $C_0^\infty(B_\epsilon(\Gamma))'$)
\begin{equation}
    \Delta w_1 = \nu_i \nu_j Q \; \mathcal{H}^{n-1} \mres \Gamma.
\end{equation}
and $w_2 \in L^1_{loc}(B_\epsilon(\Gamma))$ solves  (distributionally in $C_0^\infty(B_\epsilon(\Gamma))'$)
\begin{equation}
    \Delta w_2 = (\partial^2_{ij} h + g_{ij}) \in L^p_{loc}(B_\epsilon(\Gamma)).
\end{equation}
We remark that this decomposition is not unique, only up to harmonic functions on $B_\epsilon(\Gamma)$. However, since these are smooth, they do not play a role for the (local) regularity. 
From Lemma \ref{lem:LocLip} we conclude that $w_1 \in W^{1,\infty}_{loc}(B_\epsilon(\Gamma))$ and elliptic regularity yields $w_2 \in W^{2,p}_{loc}(B_\epsilon(\Gamma))$. Therefore we obtain $\partial^2_{ij} u =w_1+w_2 \in W^{1,\infty}_{loc}(B_\epsilon(\Gamma))$. Since by Lemma \ref{lem:LocBiharm} $u \in C^\infty(\overline{\Omega \setminus B_{\frac{\epsilon}{2}}(\Gamma))})$ we also have $\partial^2_{ij} u \in C^\infty(\overline{\Omega \setminus B_{\frac{\epsilon}{2}}(\Gamma)})$. 
Since $i,j \in \{ 1,...,n\}$ were arbitrary we find that $u \in W^{3,\infty}(\Omega)$. 
\end{proof}

\section{Regularity inheritance (Proof of Theorem \ref{thm:bvreg})}

Next we address the question of regularity inheritance for our fourth order problem.
To this end we first need some notation. We say that a continuous function $f : \Gamma \rightarrow \mathbb{R}$ lies in $ C^{1,1}(\Gamma)$ if $f \in C^1(\Gamma)$ and there exists $L > 0$ such that 
\begin{equation}\label{eq:Liptan}
    |\nabla_\Gamma f(x) - \nabla_\Gamma f(y)| \leq L|x-y| \quad \forall x,y \in \Gamma.  
\end{equation}
Here all norms are taken in $\mathbb{R}^n$ and $\nabla_\Gamma f(x)$ is defined as in \cite[p. 42]{Simon} (and always looked at as a vector in $\mathbb{R}^n$ by means of the usual identification $T_x\Gamma \subset \mathbb{R}^n$). We now turn to an extension result which will be of use for the proof. 
\begin{lemma}\label{lem:besov}
Let $\Gamma = \partial \Omega' \in C^{2,1}$. Then each function in $f \in C^{1,1}(\Gamma)$ 
can be extended to a function in $W^{2,\infty}(\Omega)$. 
\end{lemma}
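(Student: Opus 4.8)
The plan is to extend $f$ explicitly in the tubular neighborhood $B_\epsilon(\Gamma)$ by composing with the nearest point projection $\pi_\Gamma$, and then to cut off and extend by zero. The decisive point is that the stronger hypothesis $\Gamma \in C^{2,1}$ upgrades the regularity of $\pi_\Gamma$ from $C^1$ (as recorded in Section 2 for $\Gamma \in C^2$) to $C^{1,1}$ on $B_\epsilon(\Gamma)$. Indeed, the refined $C^{k,\alpha}$-version of \cite[Appendix 14.6]{GilTru} gives $d_\Gamma \in C^{2,1}(\overline{B_\epsilon(\Gamma)})$; and since the Gauss map satisfies $\nu \in C^{1,1}(\Gamma)$, the tubular-neighborhood parametrization $(y,t) \mapsto y + t\,\nu(y)$ is a $C^{1,1}$-diffeomorphism with $C^{1,1}$-inverse (inverse function theorem for $C^{1,1}$-maps with invertible derivative). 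As $\pi_\Gamma$ is the $\Gamma$-component of this inverse, we obtain $\pi_\Gamma \in C^{1,1}(\overline{B_\epsilon(\Gamma)};\mathbb{R}^n)$.

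First I would set $\tilde f := f \circ \pi_\Gamma$ on $B_\epsilon(\Gamma)$. Since $\pi_\Gamma$ restricts to the identity on $\Gamma$, we have $\tilde f\vert_\Gamma = f$, so $\tilde f$ is an extension of $f$. To see $\tilde f \in C^{1,1}(B_\epsilon(\Gamma))$ I would apply the chain rule, which is valid classically here because all maps are genuinely $C^1$: for $x \in B_\epsilon(\Gamma)$,
$$\nabla \tilde f(x) = D\pi_\Gamma(x)^T \,(\nabla_\Gamma f)(\pi_\Gamma(x)),$$
where I use that $\mathrm{range}(D\pi_\Gamma(x)) \subset T_{\pi_\Gamma(x)}\Gamma$, so that only the tangential gradient $\nabla_\Gamma f$ enters. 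Both factors are bounded and Lipschitz on $B_\epsilon(\Gamma)$: the matrix field $D\pi_\Gamma$ is Lipschitz because $\pi_\Gamma \in C^{1,1}$, and $(\nabla_\Gamma f)\circ \pi_\Gamma$ is Lipschitz as the composition of the Lipschitz map $\nabla_\Gamma f$ (Lipschitz by the definition of $C^{1,1}(\Gamma)$, cf. \eqref{eq:Liptan}) with the Lipschitz map $\pi_\Gamma$. A product of two bounded Lipschitz functions is again Lipschitz, so $\nabla \tilde f$ is Lipschitz, i.e. $\tilde f \in C^{1,1}(B_\epsilon(\Gamma))$.

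It then remains to globalize. I would fix a cutoff $\eta \in C_0^\infty(B_\epsilon(\Gamma))$ with $\eta \equiv 1$ on $B_{\epsilon/2}(\Gamma) \supset \Gamma$ and set $F := \eta\,\tilde f$, extended by zero to all of $\mathbb{R}^n$. Since $\eta$ vanishes near $\partial B_\epsilon(\Gamma)$, this $F$ is $C^{1,1}$ across $\partial B_\epsilon(\Gamma)$, hence $F \in C^{1,1}(\mathbb{R}^n)$ with compact support and globally bounded, globally Lipschitz gradient; therefore $F \in W^{2,\infty}(\mathbb{R}^n)$, and in particular $F\vert_\Omega \in W^{2,\infty}(\Omega)$. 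Because $\eta \equiv 1$ and $\pi_\Gamma = \mathrm{id}$ on $\Gamma$, one has $F\vert_\Gamma = f$, so $F$ is the desired extension.

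The main obstacle is precisely the regularity claim $\pi_\Gamma \in C^{1,1}$: Section 2 only recorded $\pi_\Gamma \in C^1$ for $\Gamma \in C^2$, and it is exactly here that the hypothesis $\Gamma \in C^{2,1}$ is exploited. I expect this to require the quantitative $C^{k,\alpha}$-refinement of the distance-function estimates (equivalently, an inverse function theorem for $C^{1,1}$-maps applied to the tubular-neighborhood parametrization), which goes slightly beyond what is literally quoted from \cite[Appendix 14.6]{GilTru}. Once this is granted, the chain-rule computation and the cutoff-and-extend step are routine.
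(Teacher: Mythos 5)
Your proposal is correct and follows essentially the same route as the paper: the paper also extends $f$ by composing with the $\Gamma$-component $\Psi$ of the inverse of the tubular-neighborhood map $\Phi(x,t)=x+t\nu(x)$ (which is exactly the nearest-point projection), uses $\Gamma\in C^{2,1}$ to get $\Phi^{-1}\in C^{1,1}$, verifies Lipschitz continuity of $\nabla(f\circ\Psi)$ via the same chain-rule/product estimate, invokes $C^{1,1}=W^{2,\infty}$, and finishes with a cutoff. The only differences are cosmetic (explicit Lipschitz constants versus your ``product of bounded Lipschitz functions'' phrasing, and cutting off before versus after identifying $C^{1,1}$ with $W^{2,\infty}$).
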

\begin{proof}
Let $f \in C^{1,1}(\Gamma)$ and $L>0$ be as in \eqref{eq:Liptan}. By \cite[Section 14.6]{GilTru} and the fact that $\partial \Omega' \in C^{2,1}$ we know that for some suitably small $\varepsilon_0 \in (0, \mathrm{dist}(\Gamma,\Omega^C))$ the function $\Phi: \Gamma \times (-\varepsilon_0,\varepsilon_0) \rightarrow \Omega$ given by $\Phi(x) := x + t \nu(x)$ is a $C^{1,1}$-regular diffeomorphism onto its image. In particular, for any fixed $\varepsilon \in (0,\varepsilon_0)$ the set $T :=  \Phi(\Gamma \times (-\varepsilon,\varepsilon))$ is an open neighborhood of $\Gamma$ with $C^{1,1}$-boundary. Due to this fact we have $C^{1,1}(\bar{T}) = W^{2,\infty}(T)$, cf. \cite[Chapter 5.8.2.(b)]{Evans}. One readily checks that $\Phi^{-1}: T \rightarrow \Gamma \times (-\varepsilon,\varepsilon)$ actually lies in $C^{1,1}(\overline{T}; \mathbb{R}^n)$, say $\Phi^{-1}$ has Lipschitz constant $M_0>0$ and $D\Phi^{-1}$ has Lipschitz constant $M_1> 0$ on $\overline{T}$. Define now the functions $\Psi: T \rightarrow \Gamma$ and $\eta: T \rightarrow (-\varepsilon,\varepsilon)$ via the relation $\Phi^{-1}(z) = (\Psi(z), \eta(z))$ for all $z \in T$ and observe that both $\Psi$ and $\eta$ enjoy $C^{1,1}$-regularity on $\bar{T}$. Further, we define an extension $\bar{f}: T \rightarrow \mathbb{R}$ of $f$ via $
\bar{f}(z ) := f(\Psi(z)).$
We show now that $\bar{f} \in C^{1,1}(\overline{T})$. To this end first notice that for $ i = 1,...,n$ and all $z \in T$ one has
\begin{equation}
\partial_i \bar{f}(z)  := \langle \nabla_\Gamma f(\Psi(z)), \partial_i \Psi(z)\rangle ,
\end{equation}
where $\langle \cdot,\cdot \rangle$ denotes the Euclidean scalar product in $\mathbb{R}^n$. Now for $z_1,z_2 \in T$ and $i = 1,...,n$  we have 
\begin{align}
 & |\partial_i \bar{f}(z_2) - \partial_i \bar{f}(z_1)|  = |\langle \nabla_\Gamma f(\Psi(z_2)), \partial_i \Psi(z_2)\rangle  - \langle \nabla_\Gamma f(\Psi(z_1)), \partial_i \Psi(z_1)\rangle | 
\\ & = |\langle \nabla_\Gamma f(\Psi(z_2)) - \nabla_\Gamma f(\Psi(z_1)), \partial_i \Psi(z_2)\rangle  - \langle \nabla_\Gamma f(\Psi(z_1)), \partial_i \Psi(z_1) - \partial_i \Psi(z_2) \rangle |
\\ & = |\nabla_\Gamma f(\Psi(z_2)) - \nabla_\Gamma f(\Psi(z_1))|  \; | \partial_i \Psi(z_2) |  + | \nabla_\Gamma f(\Psi(z_1))| \; | \partial_i \Psi(z_1) - \partial_i \Psi(z_2) |
\\ & \leq L |\Psi(z_1)-\Psi(z_2)| \; |\partial_i \Psi(z_2)| +  ||f||_{C^1(\Gamma)} M_1 |z_2-z_1|
 \leq (L M_0^2 + ||f||_{C^1(\Gamma)} M_1)|z_2-z_1|.
\end{align}
We conclude that $\bar{f} \in C^{1,1}(\bar{T}) = W^{2,\infty}(T)$. As $T \subset \Omega$ is an open neighborhood of $\Gamma$ one readily checks by a suitable cutoff argument that also an extension in $W^{2,\infty}(\Omega)$ can be found. 
\end{proof}

Making use of this and the already derived auxiliary equation \eqref{eq:deltaDij} for the second derviatives of solutions, we are able to prove Theorem \ref{thm:bvreg}. 

\begin{proof}[Proof of Theorem \ref{thm:bvreg}]
Assume $\Gamma = \partial \Omega' \in C^{2,1}$. 
Following the lines of the proof of Theorem \ref{thm:optireg}, we derive as in \eqref{eq:deltaDij} that distibutionally in $C^\infty(B_\epsilon(\Gamma))'$ there holds 
\begin{equation}
    \Delta (\partial^2_{ij} u) = \nu_i\nu_j Q \; \mathcal{H}^{n-1}\mres \Gamma + f_{ij}
\end{equation}
for some $f_{ij} \in L^p_{loc}(B_\epsilon(\Gamma))$ and all $1 \leq i,j \leq n$. In particular, one can decompose as in the proof of Theorem \ref{thm:optireg} $\partial^2_{ij} u = w_1 + w_2$ where $w_2 \in W^{2,p}_{loc}(B_\epsilon(\Gamma))$ solves (distributionally) $\Delta w_2 =  f_{ij}$ and $w_1 \in L^1_{loc}(B_\epsilon(\Gamma))$ solves (distributionally)
\begin{equation}
    \Delta w_1 = \nu_i \nu_j Q  \; \mathcal{H}^{n-1} \mres \Gamma.
\end{equation}
Since $\nu_i \nu_j \in C^{1,1}(\Gamma)$ one can find an extension in $W^{2,p}(\Omega)$. Since 
also $Q \in W^{2,p}(\Omega)$ and $W^{2,p}(\Omega)$ is a Banach algebra (as $p> n$), we infer that $\Delta w_1 = \tilde{Q}  \; \mathcal{H}^{n-1}\mres \Gamma$ for some $\tilde{Q} \in W^{2,p}(B_\epsilon(\Gamma))$. Thereupon, Lemma \ref{lem:alles} yields that $\nabla w_1 \in BV_{loc}(B_\epsilon(\Gamma))$. Moreover, one has $\nabla w_2 \in W^{1,p}_{loc}(B_\epsilon(\Gamma)) \subset BV_{loc}(B_\epsilon(\Gamma))$.  All in all we infer $ \nabla (\partial^2_{ij} u ) = \nabla w_1 + \nabla w_2 \in BV_{loc}(B_\epsilon(\Gamma))$. Hence, $\partial^3_{ijk} u \in BV_{loc}(B_\epsilon(\Gamma))$ for all $1 \leq i,j,k \leq n$. The fact that $u$ is smooth on $\Omega \setminus B_{\frac{\epsilon}{2}}(\Gamma)$ and also in a neighborhood of $\partial \Omega$ (cf. Lemma \ref{lem:LocBiharm}) finally implies that $D^3 u \in BV(\Omega)$. 
\end{proof}

\subsection{SBV-Regularity for $D^3u$}\label{sec:SBV}

The following $SBV$-regularity statement for the higher order equation follows immediately from the results in Section \ref{sec:modelprob} and the auxiliary problem that has been formed in \eqref{eq:deltaDij}.

\begin{cor}\label{cor:SBV}
Let $\Gamma = \partial \Omega' \in C^{2,1}$ and $Q \in W^{2,p}(\Omega)$, $p > n$. Further, let $u \in W^{2,2}(\Omega)$ be a weak solution of \eqref{eq:biharmeqmeas} with $\Gamma,Q$ as above. 
Then $u \in W^{3,\infty}(\Omega)$ and  $D^3 u \in SBV(\Omega)$. Moreover, $J_{\partial^3_{ijk} u} \subset  \Gamma$ for all $i,j,k = 1,...,n$.
\end{cor}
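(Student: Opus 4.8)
The plan is to combine the auxiliary equation \eqref{eq:deltaDij} with the refined $SBV$-statement for the model problem, namely Corollary \ref{cor:33}. Having already established in Theorem \ref{thm:bvreg} that $u \in W^{3,\infty}(\Omega)$ (so in particular $\partial^2_{ij} u \in W^{1,\infty}(\Omega)$ and $\nabla(\partial^2_{ij} u) \in BV(\Omega)$), the remaining task is purely to upgrade the $BV$-regularity of each third derivative to $SBV$ and to locate its jump set inside $\Gamma$. First I would fix $i,j \in \{1,\dots,n\}$ and recall from the proof of Theorem \ref{thm:bvreg} the local decomposition $\partial^2_{ij} u = w_1 + w_2$ on $B_\epsilon(\Gamma)$, where $w_2 \in W^{2,p}_{loc}(B_\epsilon(\Gamma))$ is smooth enough that $\nabla w_2 \in W^{1,p}_{loc} \subset C^{0}$ contributes no singular part, and $w_1$ solves $\Delta w_1 = \tilde Q \,\mathcal{H}^{n-1}\mres\Gamma$ for some $\tilde Q \in W^{2,p}(B_\epsilon(\Gamma))$ arising from the Banach-algebra product $\nu_i\nu_j\,Q$ (this is where $\Gamma \in C^{2,1}$ and Lemma \ref{lem:besov} are used to guarantee $\nu_i\nu_j \in W^{2,p}$).

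Next I would apply Corollary \ref{cor:33} to $w_1$ (on a fixed smooth domain exhausting $B_\epsilon(\Gamma)$, with the appropriate harmonic corrector), obtaining $\nabla w_1 \in SBV$ with $J_{\partial_k w_1} \subset \Gamma$ for each $k$. Since $\nabla w_2$ has no jump part, the sum $\nabla(\partial^2_{ij} u) = \nabla w_1 + \nabla w_2$ is again $SBV$ with jump set contained in $\Gamma$; concretely, each $\partial^3_{ijk} u$ is a $BV$-function which is $C^1$ off $\Gamma$ (by Lemma \ref{lem:LocBiharm}, $u \in C^\infty(\Omega\setminus\Gamma)$) and whose only singular measure is concentrated on $\Gamma$. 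The cleanest route is to invoke Lemma \ref{lem:DeGiorgi} directly with $K = \Gamma$: we have $\partial^3_{ijk} u \in L^\infty(\Omega)$ (since $u \in W^{3,\infty}$), $\partial^3_{ijk} u \in C^1(\Omega\setminus\Gamma) \cap BV(\Omega)$ (from Theorem \ref{thm:bvreg} together with the interior smoothness of $u$ away from $\Gamma$), and $\mathcal{H}^{n-1}(\Gamma) < \infty$ because $\Gamma$ is a closed $C^1$-submanifold. Lemma \ref{lem:DeGiorgi} then yields $\partial^3_{ijk} u \in SBV(\Omega)$ with $J_{\partial^3_{ijk} u} \subset \Gamma$, which is exactly the asserted conclusion.

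I expect the main (indeed only genuine) obstacle to be a bookkeeping one rather than a conceptual one: one must verify that $\partial^3_{ijk} u \in C^1(\Omega\setminus\Gamma)$ globally, not merely locally near $\Gamma$. This follows from Lemma \ref{lem:LocBiharm}, which gives $u \in C^\infty(\Omega\setminus\Gamma)$ and $u \in C^\infty(\overline{\Omega}\cap N)$ near $\partial\Omega$, so the third derivatives are classically $C^1$ on all of $\Omega\setminus\Gamma$. One should also be slightly careful that Lemma \ref{lem:DeGiorgi} is stated for scalar $BV$-functions, so the argument is to be run component-by-component on each $\partial^3_{ijk} u$ separately, and the final statement $D^3 u \in SBV(\Omega)$ is then just the assertion that every component lies in $SBV(\Omega)$. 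Since $u \in W^{3,\infty}(\Omega) \subset L^\infty$ guarantees the required $L^\infty$-bound on each component, and the jump-set inclusion is uniform across $i,j,k$, no further work is needed.
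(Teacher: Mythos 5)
Your proposal is correct, and while your first two paragraphs essentially reconstruct the paper's own argument, the route you ultimately advocate (direct application of Lemma \ref{lem:DeGiorgi}) is genuinely different and, in fact, more streamlined. The paper's proof re-runs the derivation of the auxiliary equation \eqref{eq:deltaDij}, picks $w \in W^{2,p}_{loc}(B_\epsilon(\Gamma))$ with $\Delta w = f_{ij}$, applies a ``slight adaptation'' of Corollary \ref{cor:33} to $\partial^2_{ij}u - w$ to obtain $\nabla(\partial^2_{ij}u - w) \in SBV_{loc}(B_\epsilon(\Gamma))$, and then glues this local information near $\Gamma$ with the smoothness of $u$ on $\overline{\Omega \setminus B_{\frac{\epsilon}{2}}(\Gamma)}$ from Lemma \ref{lem:LocBiharm}; it thus stays at the level of the model problem and never needs the global theorems. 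Your direct route instead feeds the already-established global facts into Lemma \ref{lem:DeGiorgi} with $K = \Gamma$, applied component-by-component to each scalar function $\partial^3_{ijk}u$: the $L^\infty$-bound comes from Theorem \ref{thm:optireg} (which applies since $C^{2,1} \subset C^2$), the $BV$-bound from Theorem \ref{thm:bvreg}, the $C^1$-regularity off $\Gamma$ from Lemma \ref{lem:LocBiharm}, and $\Gamma$ is relatively closed in $\Omega$ with $\mathcal{H}^{n-1}(\Gamma) < \infty$. This is shorter, avoids repeating the auxiliary-equation computation and the patching of local $SBV$-regularity, and yields the jump-set inclusion $J_{\partial^3_{ijk}u} \subset \Gamma$ in the same stroke, whereas the paper's proof leaves that inclusion implicit in the adaptation of Corollary \ref{cor:33}. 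What the paper's route buys in exchange is logical independence: it reuses only the local machinery near $\Gamma$, so it would survive even if one had not proved the global statements first; but since Theorems \ref{thm:optireg} and \ref{thm:bvreg} precede the corollary, nothing is lost in your version. One small attribution slip: the $W^{3,\infty}$-regularity you invoke is the content of Theorem \ref{thm:optireg}, not of Theorem \ref{thm:bvreg} (which only gives $D^3u \in BV(\Omega)$); both hold under the corollary's hypotheses, so this is harmless.
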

\begin{proof}
We conclude again following the lines of the proof of Theorem \ref{thm:optireg} (up to \eqref{eq:deltaDij}) that 
(distributionally in $C_0^\infty(B_\epsilon(\Gamma))'$) one has for all $i,j \in \{ 1,..., n\}$
\begin{equation}
    \Delta (\partial^2_{ij}u ) = \nu_i \nu_j Q \;  \mathcal{H}^{n-1} \mres \Gamma + f_{ij}
\end{equation}
for some $f_{ij} \in L^p_{loc}(B_\epsilon(\Gamma))$. Notice once again that there exists $w \in W^{2,p}_{loc}(B_\epsilon(\Gamma))$ auch that $\Delta w = f$. Therefore (distibutionally in $C_0^\infty(B_\epsilon(\Gamma))'$) one has
\begin{equation}
    \Delta (\partial^2_{ij} u - w ) = \nu_i \nu_j Q \; \mathcal{H}^{n-1} \mres \Gamma. 
\end{equation}
With (a slight adaptation of) Corollary \ref{cor:33} one concludes that $\nabla (\partial^2_{ij} u - w)  \in SBV_{loc}(B_\epsilon(\Gamma))$ and hence (since $\nabla w \in W^{1,p}_{loc}(B_\epsilon(\Gamma)) \subset SBV_{loc}(B_\epsilon(\Gamma))$) one has $\nabla \partial^2_{ij}u \in SBV_{loc}(B_\epsilon(\Gamma))$. Since also $\nabla\partial^2_{ij} u \in C^\infty( \overline{\Omega \setminus B_{\frac{\epsilon}{2}}(\Gamma)} )$ (cf. Lemma \ref{lem:LocBiharm}), we find that $\nabla \partial^2_{ij} u \in SBV(\Omega)$. The claim follows.
\end{proof}



 \section{Optimal regularity for the biharmonic Alt-Caffarelli problem}\label{sec:AltCaffarelli}
 The biharmonic Alt-Caffarelli problem deals with the minimization of 
 \begin{equation}\label{eq:probAlt}
     \mathcal{E}(u) = \int_\Omega (\Delta u)^2 \; \mathrm{d}x + |\{ u > 0\}|
 \end{equation}
 among all $u \in W^{2,2}(\Omega)$ such that $u - u_0 \in W_0^{1,2}(\Omega)$ for some $u_0 \in C^\infty(\overline{\Omega})$ such that $u_0 > 0$. Here $\Omega \subset \mathbb{R}^n$ is a smooth domain. It has raised some interest in recent literature, cf. \cite{Serena1}, \cite{Serena2}, \cite{AltCaffarelliMarius}. 
 Minimizers have to balance out two competing interests: On the one hand bending has to be minimized, but on the other hand minimizers must stay below the zero level on a large set. The set $\Gamma := \{ u  = 0 \}$ is a \emph{free boundary} of the problem, where minimizers lose regularity due to the fact that $\mathcal{E}$ is not Frechet  differentiable with respect to perturbations whose support intersects $\Gamma$.
 
 \begin{figure}[ht]
    \centering
    \begin{subfigure}[b]{0.9\textwidth}
        \includegraphics[width=\textwidth]{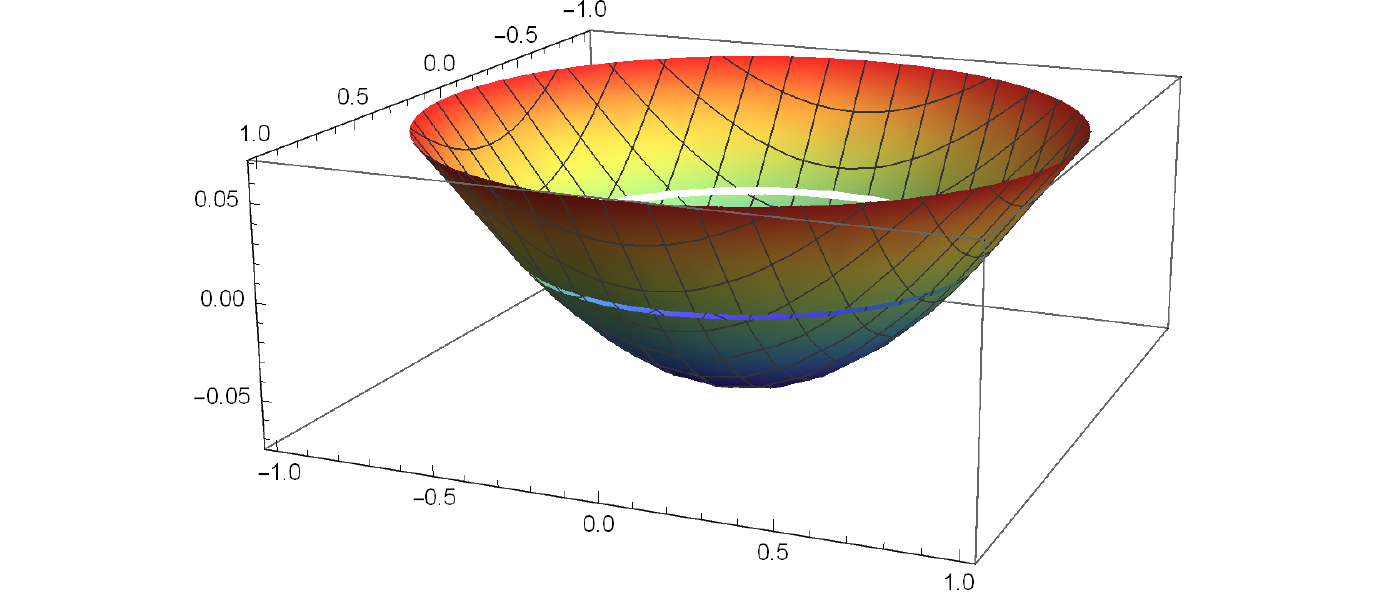}
        \caption{3D-Plot of a minimizer $u$}
    \end{subfigure}
    
    \begin{subfigure}[b]{0.48\textwidth}
        \includegraphics[width=\textwidth]{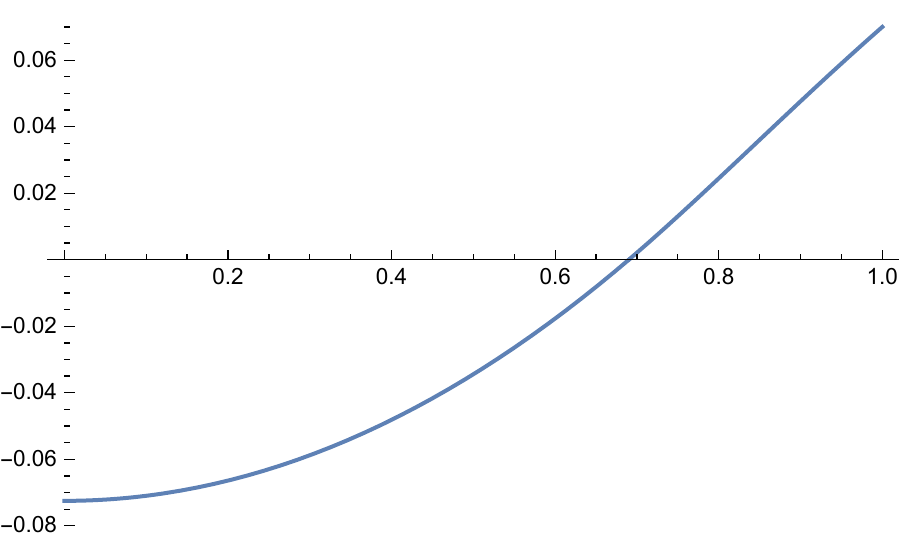}
        \caption{Radial profile curve $r \mapsto u(re_1)$}
    \end{subfigure}  
    ~
    \begin{subfigure}[b]{0.48\textwidth}
        \includegraphics[width=\textwidth]{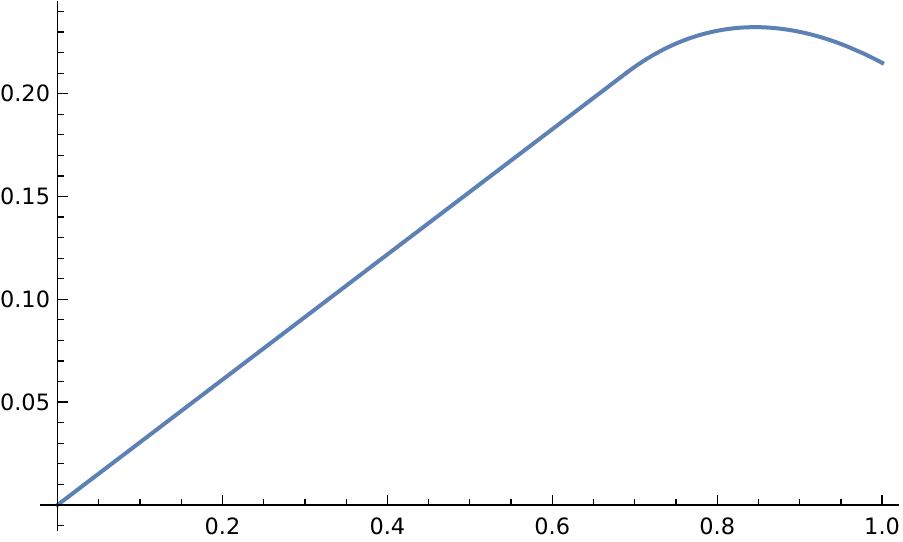}
        \caption{First radial derivative $r \mapsto \partial_r u(re_1)$}
    \end{subfigure}
    
    \begin{subfigure}[b]{0.48\textwidth}
        \includegraphics[width=\textwidth]{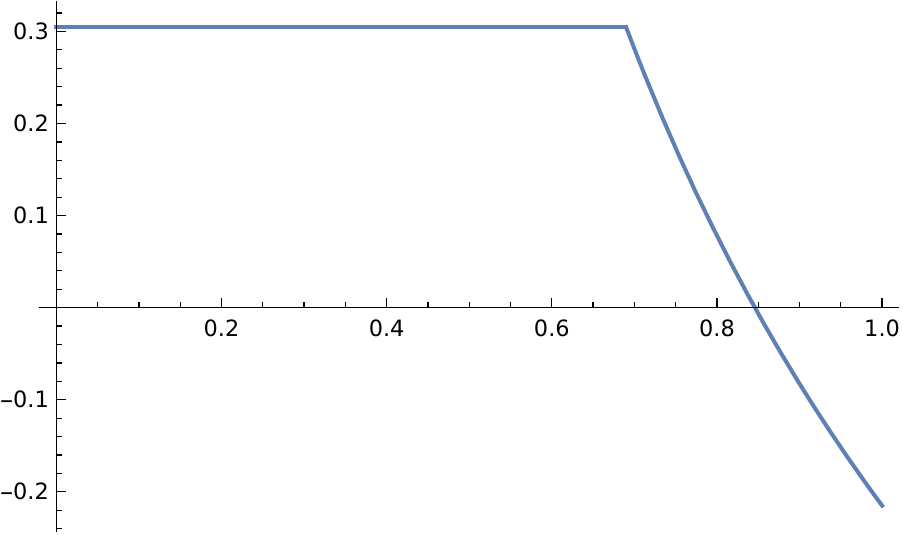}
        \caption{Second radial derivative $r \mapsto \partial_r^2 u(re_1)$}
    \end{subfigure}
    ~
    \begin{subfigure}[b]{0.48\textwidth}
        \includegraphics[width=\textwidth]{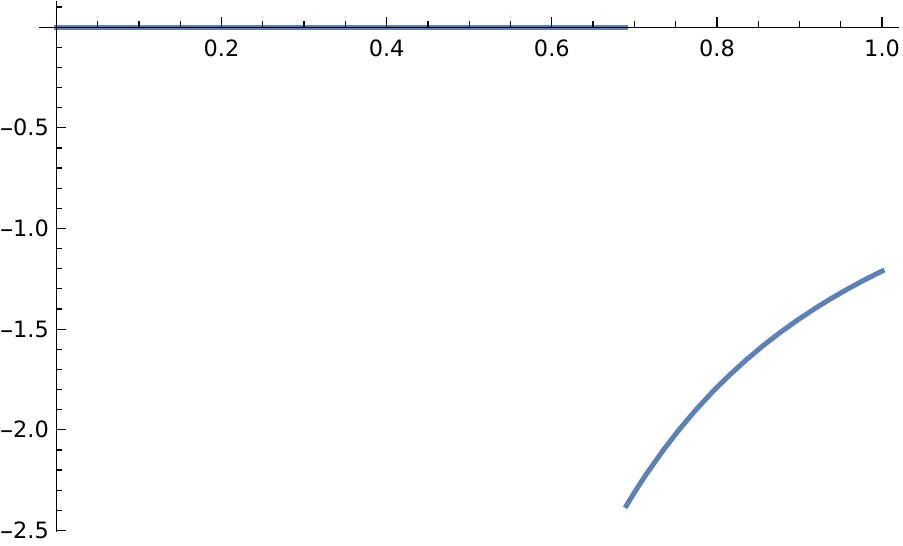}
        \caption{Third radial derivative $r \mapsto \partial_r^3 u(re_1)$}
    \end{subfigure}  
    \caption{A minimizer $u$ for $ \Omega=B_1(0) \subset \mathbb{R}^2$ with constant boundary datum $u_0 \equiv 0.07.$ In this situation, radial symmetry of  minimizers and explicit formulas are proved in \cite[Section 10]{AltCaffarelliMarius}. }\label{fig:OrbitlikeFreeElastica}
\end{figure}
 
 The behavior of minimizers in dimension $n = 2$ is studied in \cite{AltCaffarelliMarius}. 
 
 \begin{theorem}[{\cite[Theorem 1.4]{AltCaffarelliMarius}}]\label{thm:AltCaffa}
 Suppose that $n = 2$ and $\Omega \subset \mathbb{R}^2$ is a smooth domain and $u_0 \in C^\infty(\overline{\Omega}), u_0 > 0$. Then each minimizer of the problem \eqref{eq:probAlt} lies in $C^2(\Omega)$ and satisfies $\nabla u \neq 0$ on the free boundary $\Gamma = \{ u = 0 \}$. Moreover, $\Gamma = \bigcup_{i = 1}^N \partial G_i$ for finitely many disjoint domains $G_i$ with $C^2$-smooth boundary (whose boundaries are also disjoint). Furthermore, the minimizer satisfies the equation 
 \begin{equation}\label{eq:ELeq}
     \int_\Omega \Delta u \Delta \phi \; \mathrm{d}x  = - \frac{1}{2} \int_\Gamma \frac{1}{|\nabla u |} \phi \; \mathrm{d}x \quad \forall \phi \in W^{2,2}(\Omega) \cap W_0^{1,2}(\Omega). 
 \end{equation}
 \end{theorem}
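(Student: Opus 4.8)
The plan is to treat the statement as a free boundary regularity result and proceed in four stages: existence, interior biharmonicity, a blow-up analysis at the free boundary, and finally the derivation of \eqref{eq:ELeq}. First I would prove existence of minimizers by the direct method. On the admissible class the bending term $u \mapsto \int_\Omega (\Delta u)^2$ is convex and, together with the constraint $u - u_0 \in W_0^{1,2}(\Omega)$, coercive on $W^{2,2}(\Omega)$, while the penalty $u \mapsto |\{u > 0\}|$ is lower semicontinuous along sequences converging a.e.\ (if $u(x) > 0$ then $u_k(x) > 0$ eventually, so $\chi_{\{u > 0\}} \le \liminf_k \chi_{\{u_k > 0\}}$ pointwise and Fatou applies). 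A weakly convergent minimizing sequence therefore produces a minimizer.

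Second, I would record that a minimizer is biharmonic away from $\Gamma = \{u = 0\}$: for $\phi \in C_c^\infty(\{u > 0\})$ or $\phi \in C_c^\infty(\{u < 0\})$ the competitor $u + t\phi$ leaves $|\{u > 0\}|$ unchanged for small $t$, so the vanishing of the first variation of the bending term forces $\Delta^2 u = 0$ weakly there; elliptic regularity then gives $u \in C^\infty(\Omega \setminus \Gamma)$. Since $u = u_0 > 0$ in a neighborhood of $\partial\Omega$, the set $\Gamma$ is compactly contained in $\Omega$.

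The analytic heart of the argument --- and the main obstacle --- is the regularity of $u$ up to and across $\Gamma$ together with the nondegeneracy $\nabla u \neq 0$ on $\Gamma$. I would first establish a priori local $C^{1,\alpha}$ bounds that do not presuppose any free boundary condition, by testing minimality against competitors built from truncations and biharmonic replacements. The decisive ingredient is a two-sided growth control at each free boundary point $x_0$: an upper bound $\sup_{B_r(x_0)} |u| \le C r$ from the regularity estimates and a nondegeneracy lower bound $\sup_{B_r(x_0)} |u| \ge c r$, the latter obtained by comparison with explicit biharmonic competitors that strictly decrease $|\{u > 0\}|$ unless $u$ grows at least linearly. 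Feeding these bounds into a blow-up analysis --- rescaling $u_r(x) := u(x_0 + rx)/r$ so that subsequential limits are $1$-homogeneous --- one classifies the blow-ups as half-plane profiles with nonvanishing gradient; this yields $\nabla u \neq 0$ on $\Gamma$ and, after bootstrapping the equation (see below), the regularity $u \in C^2(\Omega)$. The two-dimensionality enters decisively in the classification of blow-ups, and I expect it, rather than any of the softer steps, to carry the real difficulty.

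Once $\nabla u \neq 0$ on $\Gamma$, the remaining assertions follow. The structure of $\Gamma$ is soft: $\Gamma$ is the zero set of a $C^1$ function with non-vanishing gradient, hence a $C^1$ embedded $1$-manifold by the implicit function theorem, and compactness forces finitely many components, each a Jordan curve bounding a domain $G_i$. On each side $\Delta^2 u = 0$, and the density $Q = -\tfrac{1}{2|\nabla u|}$ is now as regular as $|\nabla u|$ on $\Gamma$, so a bootstrap using the single-layer structure of the equation upgrades $\Gamma$ to $C^2$ and $u$ to $C^2(\Omega)$. Finally I would derive \eqref{eq:ELeq} by computing the first variation, now that differentiability is justified by $\nabla u \neq 0$ and $\Gamma \in C^2$: the bending term contributes $2\int_\Omega \Delta u \Delta \phi$, while the coarea (Hadamard) variation of the volume term along $u + t\phi$ contributes $\int_\Gamma \phi/|\nabla u| \, \mathrm{d}\mathcal{H}^1$, since the normal speed of the level set $\{u + t\phi = 0\}$ equals $\phi/|\nabla u|$; setting the sum to zero gives exactly \eqref{eq:ELeq}.
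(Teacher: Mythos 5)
First, a structural remark: the paper does not prove this theorem at all --- it is quoted verbatim from \cite[Theorem 1.4]{AltCaffarelliMarius} and used as a black box in Section \ref{sec:AltCaffarelli}. So your attempt must be measured against the proof in that reference, which is itself a full-length paper, not against anything in the present article.

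Your outline gets the soft steps right: the direct method with Fatou for $u \mapsto |\{u>0\}|$, the fact that $\Delta^2 u = 0$ weakly on $\Omega \setminus \Gamma$ (competitors $u+t\phi$ with $\supp \phi \subset \{u>0\}$ or $\{u<0\}$ leave the volume term unchanged), and the final Hadamard first-variation computation, including the factor $-\tfrac{1}{2}$ and the density $1/|\nabla u|$. But the analytic core --- local $C^{1,\alpha}$ estimates at the free boundary, the nondegeneracy $\sup_{B_r(x_0)}|u| \geq cr$, and the blow-up classification --- is asserted rather than proved, and, more seriously, the techniques you propose for it do not survive the passage from $-\Delta$ to $\Delta^2$. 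Truncation competitors, ``biharmonic replacement'', and above all nondegeneracy ``by comparison with explicit biharmonic competitors'' are transplants from the classical Alt--Caffarelli theory, and each of them rests on the maximum/comparison principle, which fails for the bilaplacian; this is precisely the obstruction the present paper emphasizes in its introduction when explaining why the methods of \cite{PoissonMarius} do not carry over. Your blow-up step also presumes that the rescalings $u(x_0+rx)/r$ have $1$-homogeneous subsequential limits; for second-order problems this comes from a Weiss-type monotonicity formula, and you neither produce nor cite such a formula for $\Delta^2$, where it is not available off the shelf. Note moreover that the expected blow-up picture is different here: in the second-order problem minimizers have a gradient jump across the free boundary and blow-ups are genuinely two-plane profiles, whereas the content of the theorem you are proving is that $u \in C^2(\Omega)$, so blow-ups should be \emph{linear}; classifying them as ``half-plane profiles with nonvanishing gradient'' is not the right target and would in any case need the (yet unproved) free boundary condition for the limit. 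The proof in \cite{AltCaffarelliMarius} follows a different, strongly two-dimensional route based on analyzing the finite signed measure $\Delta^2 u$ concentrated on $\{u=0\}$ rather than on competitor/comparison arguments. Finally, one small but genuine error: you claim $u = u_0 > 0$ ``in a neighborhood of $\partial\Omega$''; the constraint gives $u = u_0$ only on $\partial\Omega$ in the trace sense, and $\Gamma \subset\subset \Omega$ instead follows from $u_0 > 0$ on $\partial \Omega$ together with continuity of $u$ up to the boundary (using $W^{2,2}(\Omega) \hookrightarrow C^0(\overline{\Omega})$ in dimension two).
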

 
 
 In the language of this article, each minimizer $u$ is a weak solution of the equation 
\begin{equation}\label{eq:Alt-Caffarelli}
    \begin{cases}
    \Delta^2 u = Q \;  \mathcal{H}^{n-1} \mres \Gamma  & \mathrm{in } \; \Omega \\ 
    u = u_0 , \;  \Delta u = 0 & \mathrm{on }  \; \partial \Omega
    \end{cases}
\end{equation}
 where $Q = Q(u) = -\frac{1}{2|\nabla u|} \big\vert_{\Gamma}  \in C^{0,\alpha}(\Gamma)$.

By the discussion before \eqref{eq:Poiprob} and Remark \ref{rem:LaxMilgram} one already obtains that $v= -\Delta u \in W^{1,q}_0(\Omega)$ for all $q <\infty$ [whereupon elliptic regularity yields $u \in W^{3,q}(\Omega)$ for all $q \in [1,\infty)$].   
With Theorem \ref{thm:optireg} we are  finally able to establish the optimal $W^{3,\infty}$-regularity. 

\begin{theorem}[Optimal regularity for the  biharmonic Alt-Caffarelli problem]
Let $u \in W^{2,2}(\Omega)$ be a minimizer for the problem \eqref{eq:probAlt} in dimension $n = 2$. Then $u \in W^{3,\infty}(\Omega)$ and $u \not \in C^3(\Omega)$. Furthermore, $D^3u \in SBV(\Omega)$. 
\end{theorem}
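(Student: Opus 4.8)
The plan is to recognize each minimizer as a weak solution of the measure-valued biharmonic equation and then to apply Theorem \ref{thm:optireg}, Remark \ref{rem:nonttoC3} and Corollary \ref{cor:SBV}. By Theorem \ref{thm:AltCaffa} and the discussion following \eqref{eq:Alt-Caffarelli}, $u$ is a weak solution of \eqref{eq:biharmeqmeas} with free boundary $\Gamma = \{u=0\} = \bigcup_{i=1}^N \partial G_i$ and density $Q = F(\nabla u)$ on $\Gamma$, where $F(\xi) := -\tfrac{1}{2|\xi|}$; moreover $u \in C^2(\Omega)$, $\nabla u \neq 0$ on $\Gamma$, and each $G_i$ has $C^2$-smooth boundary. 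We may assume $\Gamma \neq \emptyset$, since otherwise $u>0$ on $\Omega$ forces $u$ to be biharmonic and hence smooth. As $u_0 > 0$ on $\partial\Omega$ and $u$ is smooth near $\partial \Omega$ (cf. Lemma \ref{lem:LocBiharm}), we have $u>0$ in a neighborhood of $\partial\Omega$, so $\Gamma \subset \subset \Omega$ and $\Omega' := \bigcup_{i=1}^N G_i$ is an open set with $C^2$-boundary $\partial\Omega' = \Gamma$, compactly contained in $\Omega$. Since all arguments leading to Theorem \ref{thm:optireg} and Corollary \ref{cor:SBV} are local near $\Gamma$, they carry over verbatim to such a finite disjoint union of $C^2$-boundaries, once $\epsilon$ is chosen small enough that the tubular neighborhoods of the $\partial G_i$ are pairwise disjoint.

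First I would verify the density hypothesis $Q \in W^{2,p}(\Omega)$ for some $p > 2$, which is the crux. From the discussion after \eqref{eq:Alt-Caffarelli} we already know $u \in W^{3,q}(\Omega)$ for every $q < \infty$; fixing some $q > 2$ we obtain in particular $D^2 u \in L^\infty_{loc}(\Omega)$ and $D^3 u \in L^q(\Omega)$. As $\nabla u \neq 0$ on the compact set $\Gamma$, continuity provides a neighborhood $U$ of $\Gamma$ on which $|\nabla u|$ is bounded away from zero, so that $F$ is smooth on a neighborhood of the range of $\nabla u\vert_U$. The chain rule for Sobolev functions then yields $DQ = DF(\nabla u)\,D^2 u \in L^\infty(U)$ and $D^2 Q = D^2 F(\nabla u)(D^2u, D^2 u) + DF(\nabla u)\,D^3 u \in L^q(U)$, whence $Q \in W^{2,q}(U)$. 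A standard cutoff produces an extension $Q \in W^{2,p}(\Omega)$, e.g. with $p = 3 > 2 = n$. Theorem \ref{thm:optireg} now applies and gives $u \in W^{3,\infty}(\Omega)$.

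It remains to prove the two finer statements. Since $\nabla u \neq 0$ on $\Gamma \neq \emptyset$, the density $Q = -\tfrac{1}{2|\nabla u|}$ is strictly negative, so $Q\vert_\Gamma \not\equiv 0$; Remark \ref{rem:nonttoC3} then yields $-\Delta u \notin C^1(\Omega)$ and hence $u \notin C^3(\Omega)$. For the inclusion $D^3 u \in SBV(\Omega)$ I would invoke Corollary \ref{cor:SBV}, whose hypotheses require the stronger regularity $\Gamma \in C^{2,1}$. This is obtained by a bootstrap: the already established regularity $u \in W^{3,\infty}(\Omega) \hookrightarrow C^{2,1}(\overline\Omega)$ together with $\nabla u \neq 0$ on $\Gamma$ lets the implicit function theorem represent $\Gamma = \{u = 0\}$ locally as the graph of a $C^{2,1}$-function, upgrading $\Gamma$ from $C^2$ to $C^{2,1}$. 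With $\Gamma \in C^{2,1}$ and $Q \in W^{2,3}(\Omega)$ in hand, Corollary \ref{cor:SBV} gives $D^3 u \in SBV(\Omega)$.

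The main obstacle is exactly the density regularity $Q \in W^{2,p}$: this does not follow from the mere $C^{0,\alpha}(\Gamma)$-bound of Theorem \ref{thm:AltCaffa}, but must be extracted from the a priori $W^{3,q}$-regularity of $u$ combined with the non-degeneracy $\nabla u \neq 0$. The secondary, seemingly circular point — that the $SBV$-statement needs $\Gamma \in C^{2,1}$ while the free boundary is only known to be $C^2$ — is resolved by the order of the argument: $W^{3,\infty}$-regularity is proved first (using only $\Gamma \in C^2$) and then fed back through the implicit function theorem to upgrade $\Gamma$ to $C^{2,1}$.
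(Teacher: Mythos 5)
Your proposal is correct and follows essentially the same route as the paper's proof: both extend $Q=-\tfrac{1}{2|\nabla u|}$ to a $W^{2,p}$-function using the non-degeneracy $\nabla u\neq 0$ near $\Gamma$, the a priori $W^{3,q}$-regularity and a cutoff, then apply Theorem \ref{thm:optireg} (the paper handles the finitely many components $\partial G_i$ by linearity/superposition rather than your locality argument, an immaterial difference), obtain $u\notin C^3(\Omega)$ from Lemma \ref{lem:alles} (equivalently Remark \ref{rem:nonttoC3}), and upgrade $\Gamma$ to $C^{2,1}$ via $u\in W^{3,\infty}(\Omega)=C^{2,1}(\overline{\Omega})$ before invoking Theorem \ref{thm:bvreg} and Corollary \ref{cor:SBV}. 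The only point of divergence is your reduction to $\Gamma\neq\emptyset$: this is harmless for the $W^{3,\infty}$- and $SBV$-claims, but note that in the degenerate case $\Gamma=\emptyset$ the assertion $u\notin C^3(\Omega)$ would actually fail --- a caveat the paper's own proof also leaves implicit by tacitly assuming a nonempty free boundary.
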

\begin{proof}
We already know that $u \in W^{3,q}(\Omega)$ for all $q \in [1,\infty)$ and that $\nabla u \neq 0$ on $\Gamma = \{ u = 0 \}$. Notice in particular that $\nabla u \in W^{2,q}(\Omega)$ for all $q \in [1,\infty)$, and hence also $\nabla u \in C^{1,\alpha}(\overline{\Omega})$ for all $\alpha \in [0,1)$. 
 Thus there exists $\epsilon> 0$ such that $\nabla u \neq 0$ on $B_\epsilon ( \Gamma)$. Choose $\psi \in C^2_0(B_\epsilon(\Gamma))$ such that $\psi \equiv 1$ on $\Gamma$. 
We infer that $\tilde{Q} := \tilde{Q}(u) := - \frac{1}{2|\nabla u|} \psi$ lies in $W^{2,q}(\Omega)$ for all $q \in [1,\infty)$ (in particular for some $q> n$) and is an extension of $Q(u)$ to $\Omega$. In particular, \eqref{eq:Alt-Caffarelli} is equivalent to
\begin{equation}\label{eq:Alt-Caffa-schlange}
    \begin{cases}
    \Delta^2 u = \tilde{Q} \;  \mathcal{H}^{n-1} \mres \Gamma  & \mathrm{in } \; \Omega \\ 
    u = u_0 , \;  \Delta u = 0 & \mathrm{on }  \; \partial \Omega.
    \end{cases}
\end{equation}
If now $\Gamma = \partial G$ for a single $C^2$-domain $G$, Theorem \ref{thm:optireg} implies that \eqref{eq:Alt-Caffa-schlange} has a solution $u \in W^{3,\infty}(\Omega).$ In the general case of multiple boundary components $\Gamma = \bigcup_{i = 1}^N (\partial G_i)$  (disjoint union) one has to write $\mathcal{H}^{n-1} \mres \Gamma = \sum_{i = 1}^N \mathcal{H}^{n-1} \mres (\partial G_i)$ and use the linearity of the equation to infer the asserted $W^{3,\infty}$-regularity. Since by Theorem \ref{thm:AltCaffa} there are only finitely many connected components of $\Gamma$, the $W^{3,\infty}$-regularity is shown. 
That $u \not \in C^3(\Omega)$ follows from Lemma \ref{lem:alles}, applied to $v= -\Delta u$.

To derive the $BV$-regularity of $D^3 u$ we intend to apply Theorem \ref{thm:bvreg}. In order to do so, we need to show that $\Gamma = \{ u = 0 \}$ is a $C^{2,1}$-regular manifold. This is due to the fact that $\Gamma$ is the zero level set of the function $u \in W^{3,\infty}(\Omega) = C^{2,1}(\Omega)$ and $\nabla u \neq 0$ on $\Gamma$. Therefore the situation \eqref{eq:Alt-Caffa-schlange} satisfies all requirements of Theorem \ref{thm:bvreg} and we finally obtain that $D^3 u \in BV(\Omega)$. Furthermore $SBV$-regularity readily follows from \eqref{eq:Alt-Caffa-schlange} and Corollary \ref{cor:SBV}.
\end{proof}

\section{Generalization to the polyharmonic case}
The findings in the above article can be generalized to polyharmonic operators $(-\Delta)^m$, $m \geq 2$ on smooth domains $\Omega \subset \mathbb{R}^n$. For this we need some more notation. A \emph{boundary operator} $B(x,D): C^\infty(\overline{\Omega}) \rightarrow L^2(\partial \Omega)$ of order $\leq N$ is a formal expression of the form 
\begin{equation}
    B(x,D) = \sum_{ \alpha \in \mathbb{N}_0^n, |\alpha| \leq N}  b_\alpha(x) D^\alpha \quad \textrm{where $b_\alpha \in C^\infty(\overline{\Omega})$ for each multiindex $\alpha$.} 
\end{equation}
Notice that Sobolev trace theory implies that each boundary operator of order $\leq N$ extends to a continuous map $ B(x,D) : W^{N+1,2}(\Omega) \rightarrow L^2(\partial \Omega)$.

\begin{definition}\label{def:polyharm}
Let $m \geq 2$, $Q \in W^{2,p}(\Omega)$, $p> n$ and $\Gamma = \partial \Omega' \in C^2$. Let $B_j(x,D): W^{2m-3,2}(\Omega) \rightarrow L^2( \partial \Omega)$, $j = 1,...,m-1$ be boundary operators of order $N_j \leq 2m-4$ such that with $B_m(x,D) := (-\Delta)^{m-1}$ the set $(B_j(x,D))_{j = 1}^{m}$ are \emph{complementing boundary conditions} in the sense of \cite[Definition 2.9]{GGS}. A function $u \in W^{2m-2,2}(\Omega)$ is said to be a weak solution to
\begin{equation}
\begin{cases}
   \; \; \;    (-\Delta)^m u = Q \; \mathcal{H}^{n-1} \mres \Gamma & \textrm{in }\Omega, \\  \; B_j(x,D)u= B_j(x,D)u_0 , \; \; j = 1,...,m-1 & \textrm{on }\partial \Omega, \\ B_m(x,D)u = 0 & \textrm{on }\partial \Omega, \end{cases}
\end{equation}
if $B_j(x,D)(u-u_0) = 0$ for all $j = 1,...,m-1$ and 
\begin{equation}\label{eq:solipoli}
    \int_\Omega [(-\Delta)^{m-1} u] \Delta \phi \; \mathrm{d}x = \int_\Gamma Q \phi \; \mathrm{d}\mathcal{H}^{n-1} \quad \forall \phi \in C^2(\overline{\Omega}) \cap W_0^{1,2}(\Omega). 
\end{equation}
\end{definition}
The regularity theorem we can show here is 
\begin{theorem}
Let $u \in W^{2m-2,2}(\Omega)$ be as in Definition \ref{def:polyharm}. Then $u \in W^{2m-1,\infty}(\Omega)$. 
\end{theorem}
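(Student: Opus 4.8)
The plan is to reduce the $2m$-th order equation to the second-order model problem of Section \ref{sec:modelprob} and then to peel off two derivatives at a time by induction on $m$. First I would read off from the weak formulation \eqref{eq:solipoli} that $v := (-\Delta)^{m-1}u \in L^2(\Omega)$ is a very weak solution of the second-order measure Dirichlet problem $-\Delta v = -Q\,\mathcal{H}^{n-1}\mres\Gamma$ with $v=0$ on $\partial\Omega$ (the boundary condition $B_m(x,D)u=(-\Delta)^{m-1}u=0$ together with the large test space encode the homogeneous datum, exactly as for \eqref{eq:Poiprob}; the sign is immaterial below). Lemma \ref{lem:alles} then gives $v\in W^{1,\infty}(\Omega)$, and its proof gives the finer structure $v=\frac{Q}{2}|d_\Gamma|+h$ near $\Gamma$ with $h\in W^{2,p}_{loc}(B_\epsilon(\Gamma))$. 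Since moreover $v\in W^{1,q}(\Omega)$ for all $q<\infty$ (Remark \ref{rem:LaxMilgram}), interior elliptic $L^q$-regularity for the elliptic operator $(-\Delta)^{m-1}$ yields $u\in W^{2m-1,q}(\Omega)$ for every $q<\infty$. An analogue of Lemma \ref{lem:LocBiharm} --- using that $v$ is harmonic, hence smooth, on $\Omega\setminus\Gamma$ and near $\partial\Omega$, and that the boundary conditions are complementing --- shows $u\in C^\infty(\Omega\setminus\Gamma)$ and that $u$ is smooth near $\partial\Omega$, so it only remains to establish $u\in W^{2m-1,\infty}(B_\epsilon(\Gamma))$.

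For this local statement I would induct on $m$, the case $m=1$ being Lemma \ref{lem:LocLip} and the case $m=2$ being exactly the proof of Theorem \ref{thm:optireg}. Differentiating the local identity $(-\Delta)^{m-1}u=\frac{Q}{2}|d_\Gamma|+h$ twice, commuting $\partial^2_{ij}$ with $(-\Delta)^{m-1}$, and invoking Lemma \ref{lem:distprep} gives the higher-order analogue of \eqref{eq:deltaDij}, namely (distributionally in $C_0^\infty(B_\epsilon(\Gamma))'$)
\[
(-\Delta)^{m-1}(\partial^2_{ij}u)=Q\,\nu_i\nu_j\,\mathcal{H}^{n-1}\mres\Gamma+f_{ij},\qquad f_{ij}:=g_{ij}+\partial^2_{ij}h\in L^p_{loc}(B_\epsilon(\Gamma)).
\]
Splitting $\partial^2_{ij}u=z_1+z_2$ with $(-\Delta)^{m-1}z_2=f_{ij}\in L^p$ (so that $z_2\in W^{2m-2,p}_{loc}\subset C^{2m-3}$, since $p>n$) and $(-\Delta)^{m-1}z_1=Q\nu_i\nu_j\,\mathcal{H}^{n-1}\mres\Gamma$, the inductive hypothesis applied to $z_1$ (a solution of the order-$2(m-1)$ measure problem) yields $z_1\in W^{2m-3,\infty}_{loc}$, whence $\partial^2_{ij}u\in W^{2m-3,\infty}_{loc}(B_\epsilon(\Gamma))$ for all $i,j$. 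Since every derivative of $u$ of order $\geq 2$ factors through some $\partial^2_{ij}$ and the orders $0,1$ are controlled by $u\in W^{2m-1,q}$, this forces all derivatives of $u$ up to order $2m-1$ to be locally bounded, i.e. $u\in W^{2m-1,\infty}(B_\epsilon(\Gamma))$; together with the first paragraph this gives $u\in W^{2m-1,\infty}(\Omega)$.

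The main obstacle is the regularity of the density in the reduced problem. For $\Gamma\in C^2$ one has $\nu=\nabla d_\Gamma$ with $d_\Gamma\in C^2$, hence $\nu_i\nu_j\in C^1$ only, so the new density $Q\nu_i\nu_j$ lies in $C^1\subset C^{0,\alpha}$ but not in $W^{2,p}$. Consequently the self-contained lemmas of Section \ref{sec:modelprob}, which assume a $W^{2,p}$-density (and produce the $L^p$-remainder $g_{ij}$ in Lemma \ref{lem:distprep} through the term $\partial^2_{ij}(Q\nu_i\nu_j)$), cannot be re-applied verbatim at the next level of the induction. The clean way around this is to run the whole induction in the larger class of Hölder densities $Q\in C^{0,\alpha}(\Gamma)$ on surfaces $\Gamma\in C^{1,\alpha}$ --- a class that is stable under multiplication by the $C^1$-factors $\nu_i\nu_j$ --- and to take the required second-order inputs (the $W^{1,\infty}$-regularity of $v$ together with the ``$|d_\Gamma|$ plus $L^p$-remainder'' decomposition) from \cite{PoissonMarius} rather than from Lemma \ref{lem:alles}. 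Bookkeeping this density class through the induction, and matching the complementing boundary conditions across the reduction $u\mapsto\partial^2_{ij}u$, is the step I expect to require the most care.
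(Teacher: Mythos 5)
Your proof is, in skeleton and in most details, the paper's own: the paper also inducts on $m$ with base case Theorem \ref{thm:optireg}, derives exactly your auxiliary identity as equation \eqref{eq:auxiploi} (by showing that $h_m:=(-\Delta)^{m-1}u+\frac{Q}{2}|d_\Gamma|$ --- your $h$, up to sign conventions, which as you say are immaterial and which the paper itself is not consistent about --- lies in $W^{2,p}_{loc}(B_\epsilon(\Gamma))$ because the surface terms cancel in $\Delta h_m$, and then invoking Lemma \ref{lem:distprep}), performs the same splitting $\partial^2_{ij}u=w_1+w_2$ with the induction hypothesis applied to the measure part and elliptic regularity to the $L^p$ part, and treats $\overline{\Omega\setminus B_{\epsilon/2}(\Gamma)}$ by a Lemma \ref{lem:LocBiharm}-type argument using \cite[Theorem 2.20]{GGS}, just as you propose.

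The one point of genuine divergence is your third paragraph, and there you are more careful than the paper. The paper applies its induction hypothesis --- and, at the bottom level, Lemma \ref{lem:LocLip} --- to the equation with density $Q\nu_i\nu_j$ without comment, even though those statements require a density extending to a $W^{2,p}$-function; for $\Gamma\in C^2$ the factor $\nu_i\nu_j$ is only $C^1(\Gamma)$, and a $C^1$ function on $\Gamma$ need not be the restriction of any $W^{2,p}(\Omega)$-function (such restrictions are $C^{1,1-n/p}$ by trace/embedding theory, while the normal of a generic $C^2$-domain is not H\"older-continuously differentiable). The paper confronts this issue only in Theorem \ref{thm:bvreg}, where $\Gamma\in C^{2,1}$ is assumed precisely so that $\nu_i\nu_j\in C^{1,1}(\Gamma)$ extends to $W^{2,\infty}$ via Lemma \ref{lem:besov}; so what you flag is a gap in the paper's argument under its stated $C^2$-hypothesis, not a defect of your proposal relative to it. One caveat on your proposed repair, though: passing to the class $Q\in C^{0,\alpha}(\Gamma)$, $\Gamma\in C^{1,\alpha}$ and citing \cite{PoissonMarius} does fix the bottom (second-order) stage, but for $m\geq 3$ it does not obviously close the intermediate stages of the induction. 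There one needs more than $W^{1,\infty}$-regularity for the density $Q\nu_i\nu_j$: one needs a Lemma \ref{lem:distprep}-type decomposition for that density in order to manufacture the $L^p$-remainder $f_{ij}$ of the \emph{next} reduction, and $\frac{\tilde{Q}}{2}|d_\Gamma|$ cannot be differentiated twice when $\tilde{Q}$ is merely H\"older --- this decomposition is exactly what is tied to the $W^{2,p}$-class. The clean, self-contained repair is therefore your other alternative: assume $\Gamma\in C^{2,1}$ throughout, so that by Lemma \ref{lem:besov} and the Banach-algebra property of $W^{2,p}$ ($p>n$) the admissible density class is stable under multiplication by $\nu_i\nu_j$, and the induction closes entirely within the paper's framework.
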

\begin{proof}
We proceed by induction over $m$. For $m= 2$ the statement is Theorem \ref{thm:optireg}. Next let $m > 2$ and $i,j \in \{ 1,...,m \}$. Let $\epsilon>0$ be chosen as in Section \ref{sec:prelom}. 
For $\phi \in C_0^\infty(B_\epsilon(\Gamma))$ we can compute 
\begin{equation}
    \int [(-\Delta)^{m-2} (\partial^2_{ij} u)] \Delta \phi \; \mathrm{d}x = \int [(-\Delta)^{m-2} u] \Delta (\partial^2_{ij} \phi) \; \mathrm{d}x = \int [(-\Delta)^{m-1} u] \partial^2_{ij} \phi \; \mathrm{d}x. 
\end{equation}
Defining $h_m :=[(-\Delta)^{m-1} u] +\frac{Q}{2}|d_\Gamma|$ where $d_\Gamma$ is as in Section \ref{sec:prelom} we obtain 
\begin{equation}
    \int [(-\Delta)^{m-2} (\partial^2_{ij} u)] \Delta \phi \; \mathrm{d}x = \int h_m \partial^2_{ij} \phi \; \mathrm{d}x - \int \frac{Q}{2} |d_\Gamma| \partial^2_{ij}\phi \; \mathrm{d}x.
\end{equation}
Using Lemma \ref{lem:distprep} we find that 
\begin{equation}\label{eq:poliaux}
    \int [(-\Delta)^{m-2} (\partial^2_{ij} u)] \Delta \phi \; \mathrm{d}x = \int h_m \partial^2_{ij} \phi \; \mathrm{d}x - \int_\Gamma Q \nu_i \nu_j \phi \;   \mathrm{d} \mathcal{H}^{n-1} - \int g_{ij} \phi \; \mathrm{d}x.
\end{equation}
for some $g_{ij} \in L^p(B_\epsilon(\Gamma))$. Next we show that $h_m \in W^{2,p}_{loc}(B_\epsilon(\Gamma))$. To this end observe that for each $\psi \in C_0^\infty(B_\epsilon(\Gamma))$ one has (by \eqref{eq:solipoli} and Lemma \ref{lem:distprep})  
\begin{align}
    \int h_m \Delta \psi \; \mathrm{d}x &  = \int_\Omega [(-\Delta)^{m-1} u] \Delta \psi \; \mathrm{d}x  + \int_\Omega \frac{Q}{2}|d_\Gamma| \Delta \psi \; \mathrm{d}x 
    \\ & = - \int_\Gamma Q \psi \; \mathrm{d}\mathcal{H}^{n-1} + \int_\Gamma Q \psi \; \mathrm{d} \mathcal{H}^{n-1}+ \int (\mathrm{tr}(g)) \psi \; \mathrm{d}x  = \int (\mathrm{tr}(g)) \psi \; \mathrm{d}x, 
\end{align}
where the matrix $g = (g_{ij}) \in L^p(B_\epsilon(\Gamma))$ is as above. Elliptic regularity yields then that $h_m \in W^{2,p}_{loc}(B_\epsilon(\Gamma))$ and \eqref{eq:poliaux} can be reformulated to 
\begin{equation}
    \int (-\Delta)^{m-2} (\partial^2_{ij}u ) \Delta \phi \; \mathrm{d}x = -\int_\Gamma Q \nu_i \nu_j \phi \; \mathrm{d}\mathcal{H}^{n-1} + \int (\partial^2_{ij} h_m - g_{ij}) \phi \; \mathrm{d}x.
\end{equation}
Distibutionally in  $C_0^\infty(B_\epsilon(\Gamma))'$ there holds therefore 
\begin{equation}\label{eq:auxiploi} \tag{AUX2}
    (-\Delta)^{m-1} (\partial^2_{ij}u)   = Q \nu_i \nu_j \; \mathcal{H}^{n-1} \mres \Gamma - (\partial^2_{ij} h_m - g_{ij}).
\end{equation}
Using the induction assumption and the fact that $\partial^2_{ij} h_m - g_{ij} \in L^p(B_\epsilon(\Gamma))$ one can proceed as in the discussion after \eqref{eq:deltaDij} to prove that $\partial^2_{ij} u \in W^{2 (m-1) - 1, \infty}_{loc}(B_\epsilon(\Gamma))$, i.e. $D^2 u \in W^{2m-3,\infty}_{loc}(B_\epsilon(\Gamma))$, implying $u \in W^{2m-1,\infty}_{loc}(B_\epsilon(\Gamma))$. Similar to Lemma \ref{lem:LocBiharm} one can also deduce that $u \in C^\infty(\overline{\Omega \setminus B_{\frac{\epsilon}{2}}(\Gamma)} )$ (this time using \cite[Theorem 2.20]{GGS}), yielding that $u \in W^{2m-1,\infty}(\Omega)$. 
\end{proof}

Keeping using the auxiliary equation \eqref{eq:auxiploi}, one can also deduce generalizations of the $BV$-regularity result (Theorem \ref{thm:bvreg}) and also of the $SBV$-regularity result  in Section \ref{sec:SBV}. All in all one obtains that for interfaces $\Gamma = \partial \Omega' \in C^{2,1}$ one has $D^{2m-1} u \in SBV(\Omega)$. The details can be safely omitted as they follow the lines of the previous arguments (with the usage of the auxiliary equation \eqref{eq:auxiploi} instead of \eqref{eq:deltaDij} and the usage of elliptic regularity for $(-\Delta)^{m-1}$ instead of $(-\Delta)^1$).








\end{document}